\theoremstyle{plain}
\newtheorem{theorem}                 {Theorem}      [section]
\newtheorem{corollary}    [theorem]  {Corollary}
\newtheorem{proposition}  [theorem]  {Proposition}
\theoremstyle{definition}
\newtheorem{definition}   [theorem]  {Definition}
\newtheorem{example}      [theorem]  {Example}
\numberwithin{equation}{section}
\def \cn{{\mathbb C}}
\def \C{{\mathbb C}}
\def \hn{{\mathbb H}}
\def \rn{{\mathbb R}}
\def \B{\mathcal B}
\def \E{\mathcal E}
\def \F{\mathcal F}
\def \S{\mathcal S}
\def\nab#1#2{\hbox{$\nabla$\kern -.3em\lower 1.0 ex
		\hbox{$#1$}\kern -.1 em {$#2$}}}
\def\hatnab#1#2{\hbox{$\nabla$\kern -.3em\lower 1.0 ex
		\hbox{$#1$}\kern -.1 em {$#2$}}}
\def \Re{\mathfrak R\mathfrak e}
\def \g{\mathfrak{g}}
\def \m{\mathfrak{m}}
\def \p{\mathfrak{p}}
\def \GLR#1{\mathbf{GL}_{#1}(\rn)}
\def \glr#1{\mathfrak{gl}_{#1}(\rn)}
\def \GLC#1{\mathbf{GL}_{#1}(\cn)}
\def \glc#1{\mathfrak{gl}_{#1}(\cn)}
\def \GLH#1{\mathbf{GL}_{#1}(\hn)}
\def \SL2{\widetilde{\text{\bf SL}}_{2}(\rn)}
\def \SO#1{\mathbf{SO}(#1)}
\def \so#1{\mathfrak{so}(#1)}
\def \U#1{\text{\bf U}(#1)}
\def \u#1{\mathfrak{u}(#1)}
\def \SU#1{\text{\bf SU}(#1)}
\def \su#1{\mathfrak{su}(#1)}
\def \Sp#1{\text{\bf Sp}(#1)}
\def \sp#1{\mathfrak{sp}(#1)}
\DeclareMathOperator{\Div}{div} 
\DeclareMathOperator{\trace}{trace}
\numberwithin{equation}{section}
\begin{document}

\subjclass[2020]{53C35, 53C43, 58E20}
	
\keywords{minimal submanifolds, eigenfunctions, symmetric spaces}

\author{Johanna Marie Gegenfurtner}
\address{Mathematics, Faculty of Science\\
Lund University\\
Box 118, Lund 221 00\\
Sweden}
\email{Johanna.Gegenfurtner@gmail.com}

\author{Sigmundur Gudmundsson}
\address{Mathematics, Faculty of Science\\
Lund University\\
Box 118, Lund 221 00\\
Sweden}
\email{Sigmundur.Gudmundsson@math.lu.se}

\title
[Compact Minimal Submanifolds of Symmetric Spaces]
{Compact Minimal Submanifolds of\\ 
the Riemannian Symmetric Spaces\\ 
$\SU n/\SO n$, $\Sp{n}/\U n$, $\SO{2n}/\U n$, $\SU{2n}/\Sp n$\\ 
via Complex-Valued Eigenfunctions}

\begin{abstract}
In this work we construct new multi-dimensional families of compact minimal submanifolds of the classical Riemannian symmetric spaces $\SU n/\SO n$, $\Sp{n}/\U n$, $\SO{2n}/\U n$ and $\SU{2n}/\Sp n$ of codimension two.
\end{abstract}
	

\maketitle

\section{Introduction}
\label{section-introduction}

The study of minimal submanifolds of a given ambient space plays a central role in differential geometry.  This has a long, interesting history and has attracted the interests of profound mathematicians for many generations.  The famous Weierstrass-Enneper representation formula, for minimal surfaces in Euclidean $3$-space, brings {\it complex analysis} into play as a useful tool for the study of these beautiful objects.

This was later generalised to the study of minimal surfaces in much more general ambient manifolds via {\it harmonic conformal immersions}.  The next  result follows from the seminal paper \cite{Eel-Sam} of Eells and Sampson from 1964.  For this see also Proposition 3.5.1 of \cite{Bai-Woo-book}.

\begin{theorem}
Let $\phi:(M^m,g)\to (N,h)$ be a smooth conformal map between Riemannian manifolds.  If $m=2$ then $\phi$ is harmonic if and only if the image is minimal in $(N,h)$.
\end{theorem}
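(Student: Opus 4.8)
The plan is to express the tension field $\tau(\phi)$ of $\phi$ in terms of the mean curvature of the immersed image and then to isolate the feature that is special to the dimension $m=2$. Throughout I assume that $\phi$ is a conformal immersion, so that $\phi^{*}h=\lambda^{2}g$ for a smooth positive function $\lambda$ on $M$ and $\phi(M)$ is a genuine immersed submanifold of $(N,h)$.

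The first step is to invoke the standard transformation law for the tension field under a conformal change of the domain metric. Writing $\tilde g=\lambda^{2}g=\phi^{*}h$, this law reads
\[
\tau_{g}(\phi)\;=\;\lambda^{2}\,\tau_{\tilde g}(\phi)\;+\;(2-m)\,d\phi\bigl(\grad_{g}\log\lambda\bigr).
\]
The decisive observation is that the coefficient $2-m$ vanishes exactly when $m=2$; equivalently, in dimension two the Dirichlet energy $E(\phi)=\tfrac12\int_{M}|d\phi|^{2}\,dv$ is conformally invariant, since $|d\phi|_{\tilde g}^{2}\,dv_{\tilde g}=\lambda^{m-2}|d\phi|_{g}^{2}\,dv_{g}$. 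Hence for a surface one has $\tau_{g}(\phi)=\lambda^{2}\,\tau_{\tilde g}(\phi)$, so $\phi$ is harmonic as a map from $(M^{2},g)$ if and only if it is harmonic as a map from $(M^{2},\tilde g)$.

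The second step is to use that, by the very definition of $\tilde g$, the map $\phi\colon (M^{2},\tilde g)\to(N,h)$ is an \emph{isometric} immersion; for an isometric immersion the tension field equals $m$ times the mean curvature vector field $\mathbf H$ of the image in $(N,h)$. Consequently $\phi\colon(M^{2},\tilde g)\to(N,h)$ is harmonic precisely when $\mathbf H\equiv 0$, that is, precisely when $\phi(M)$ is minimal. Combining this with the previous step proves both implications.

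The work involved is essentially bookkeeping rather than a genuine obstacle: one must pin down the exact conformal weight $2-m$ multiplying $d\phi(\grad_{g}\log\lambda)$ (equivalently, verify the conformal invariance of the energy in dimension two) and recall the classical identification of the tension field of an isometric immersion with its mean curvature vector. The one hypothesis to keep in mind is that $\phi$ be an immersion: if $\lambda$ is permitted to vanish, then $\phi$ may acquire branch points and ``the image'' must be interpreted with care, although the conclusion persists in the sense of branched minimal immersions.
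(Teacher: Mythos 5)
Your argument is correct and is the standard one: the paper offers no proof of this statement itself, citing Eells--Sampson and Proposition 3.5.1 of Baird--Wood, and the two-step argument you give (the conformal transformation law $\tau_{g}(\phi)=\lambda^{2}\tau_{\tilde g}(\phi)+(2-m)\,d\phi(\mathrm{grad}_{g}\log\lambda)$ with the $(2-m)$ term vanishing for surfaces, followed by the identification of $\tau(\phi)$ with $m\mathbf{H}$ for the isometric immersion $\phi\colon(M,\phi^{*}h)\to(N,h)$) is precisely the proof found in those references. Your closing caveat about points where $\lambda$ vanishes is the right one to flag, since the theorem as stated assumes only that $\phi$ is conformal, not that it is an immersion.
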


This result has turned out to be very useful in the construction of minimal surfaces in Riemannian symmetric spaces of various types.  For this we refer to \cite{Cal},
\cite{Eel-Woo}, \cite{Uhl}, \cite{Bur-Raw} and \cite{Bur-Gue}, just to name a few.
\smallskip

In their work \cite{Bai-Eel} from 1981, Baird and Eells have shown that complex-valued harmonic morphisms from Riemannian manifolds are useful tools for the study of minimal submanifolds of codimension two. 

\begin{theorem}\cite{Bai-Eel}
\label{theorem:Bai-Eel-special}
Let $\phi:(M,g)\to\cn$ be a complex-valued harmonic morphism from a Riemannian manifold.  Then every regular fibre of $\phi$ is a minimal submanifold of $(M,g)$ of codimension {\it two}.
\end{theorem}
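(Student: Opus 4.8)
The plan is to combine the Fuglede--Ishihara characterisation of harmonic morphisms with the composition law for tension fields. Recall that a map $\phi\colon(M,g)\to\cn$ is a harmonic morphism if and only if it is a harmonic map which is horizontally conformal, and that, writing $\phi=u+\mathrm{i}\,v$ with $u$ and $v$ real-valued, horizontal conformality amounts to
\[
g(\grad u,\grad u)=g(\grad v,\grad v)=:\lambda^{2},\qquad g(\grad u,\grad v)=0 .
\]
Let $F=\phi^{-1}(c)$ be a regular fibre and $p\in F$; then $\lambda(p)>0$, hence $\mathrm{d}\phi_p$ is surjective, $F$ is an embedded submanifold near $p$ with tangent space $T_pF=\ker\mathrm{d}\phi_p=\{\grad u,\grad v\}^{\perp}$, which has codimension two in $T_pM$, and the restriction of $\mathrm{d}\phi_p$ to the normal space $(T_pF)^{\perp}=\span\{\grad u,\grad v\}$ is a conformal linear isomorphism onto $\cn$; in particular it is injective. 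Since the tension field $\tau(\iota)$ of the inclusion $\iota\colon F\hookrightarrow M$ is the trace of the second fundamental form of $F$, hence a normal vector field and a constant multiple of its mean curvature, it suffices to prove that $\mathrm{d}\phi_p(\tau(\iota)_p)=0$.

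To this end we would feed $\phi\circ\iota$ into the composition formula, which here reads $\tau(\phi\circ\iota)=\mathrm{d}\phi(\tau(\iota))+\sum_{i=1}^{m-2}(\nabla\mathrm{d}\phi)(e_i,e_i)$, with $m=\dim M$ and the sum taken over an orthonormal basis $e_1,\dots,e_{m-2}$ of $T_pF$. The left-hand side vanishes because $\phi\circ\iota$ is constant on $F$, so
\[
\mathrm{d}\phi_p(\tau(\iota)_p)=-\sum_{i=1}^{m-2}(\nabla\mathrm{d}\phi)_p(e_i,e_i) .
\]
Completing $\{e_i\}$ to an orthonormal basis $\{E_a\}_{a=1}^{m}$ of $T_pM$ by adjoining $\lambda^{-1}\grad u$ and $\lambda^{-1}\grad v$, and invoking the harmonicity of $\phi$ in the form $\tau(\phi)_p=\sum_{a=1}^{m}(\nabla\mathrm{d}\phi)_p(E_a,E_a)=0$, we may trade the vertical sum for the horizontal one:
\[
\mathrm{d}\phi_p(\tau(\iota)_p)=\frac{1}{\lambda^{2}}\Big[(\nabla\mathrm{d}\phi)_p(\grad u,\grad u)+(\nabla\mathrm{d}\phi)_p(\grad v,\grad v)\Big].
\]
Everything then reduces to showing that this right-hand side is zero.

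Because the target $\cn$ is flat, the pullback connection is trivial and $(\nabla\mathrm{d}\phi)(X,Y)=\big(\nabla\mathrm{d}u(X,Y),\nabla\mathrm{d}v(X,Y)\big)$, where $\nabla\mathrm{d}u(X,Y)=g(\nabla_X\grad u,Y)$ is symmetric in $X$ and $Y$, and likewise for $v$. Differentiating the two relations of horizontal conformality in an arbitrary direction $Z$ produces the pointwise identities
\[
\nabla\mathrm{d}u(\grad u,Z)=\nabla\mathrm{d}v(\grad v,Z),\qquad \nabla\mathrm{d}u(\grad v,Z)=-\nabla\mathrm{d}v(\grad u,Z).
\]
Substituting $Z=\grad u$ and $Z=\grad v$ and using the symmetry of the Hessians, a short computation gives $\nabla\mathrm{d}u(\grad u,\grad u)+\nabla\mathrm{d}u(\grad v,\grad v)=0$ as well as $\nabla\mathrm{d}v(\grad u,\grad u)+\nabla\mathrm{d}v(\grad v,\grad v)=0$. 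Hence $\mathrm{d}\phi_p(\tau(\iota)_p)=0$, so $\tau(\iota)_p=0$ by the injectivity noted above; as $p$ was an arbitrary point of $F$, the fibre $F$ is minimal.

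We do not expect a genuine obstacle here; the care required is just the bookkeeping in the two steps. It is worth noting that both defining properties of a harmonic morphism are used, in separate places: harmonicity enters solely through the trace identity $\tau(\phi)=0$ that permits the exchange of the vertical and horizontal sums, while horizontal conformality is used twice, once to identify the normal space of $F$ and the injectivity of $\mathrm{d}\phi$ on it, and once, after differentiation, to annihilate the horizontal Hessian terms. Conceptually this is the special case $\dim\cn=2$ of the Baird--Eells formula for the tension field of a horizontally conformal submersion, in which the coefficient of the $\grad(\ln\lambda)$ term equals $\dim N-2$ and so disappears, making $\tau(\phi)$ proportional to $\mathrm{d}\phi$ applied to the mean curvature of the fibres; the computation above is a self-contained derivation of precisely this fact.
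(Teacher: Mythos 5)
The paper does not actually prove this theorem; it is imported verbatim from Baird--Eells \cite{Bai-Eel}, so there is no in-paper argument to compare yours against. That said, your proof is correct and complete. The chain of reasoning is sound at every step: the Fuglede--Ishihara characterisation gives harmonicity plus the pointwise identities $|\grad u|^2=|\grad v|^2$ and $g(\grad u,\grad v)=0$ on all of $M$ (so they may legitimately be differentiated); at a regular point $d\phi_p$ restricts to an isomorphism of the two-dimensional normal space $\mathrm{span}\{\grad u,\grad v\}$ onto $\cn$; the composition law applied to the constant map $\phi\circ\iota$ converts minimality of the fibre into the vanishing of the tangential trace of $\nabla d\phi$; harmonicity trades that tangential trace for the horizontal one; and the differentiated conformality relations, combined with the symmetry of the Hessians, kill the horizontal terms $\nabla du(\grad u,\grad u)+\nabla du(\grad v,\grad v)$ and $\nabla dv(\grad u,\grad u)+\nabla dv(\grad v,\grad v)$. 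This is precisely the standard argument --- the two-dimensional-target case of the Baird--Eells fundamental equation for horizontally conformal maps, in which the $\grad(\ln\lambda)$ term carries the coefficient $\dim N-2=0$ --- as you correctly observe in your closing remark.
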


This can be seen as dual to the above mentioned generalisation of the Weierstrass-Enneper representation. Harmonic morphisms are the much studied {\it horizontally conformal harmonic}  maps.  For an introduction to the theory we recommend the book \cite{Bai-Woo-book}, by Baird and Wood, and the regularly updated online bibliography \cite{Gud-bib}.

\section{The Main Results}

The recent work \cite{Gud-Mun-1} introduces a method for constructing minimal submanifolds of Riemannian manifolds  via {\it submersions}, see Theorem \ref{theorem-Gud-Mun-1}.  Then this scheme is employed to  provide compact examples in several important cases. The main ingredients for this new procedure are the so called {\it complex-valued eigenfunctions} on the Riemannian ambient space.  These are functions which are eigen both with respect to the classical {\it Laplace-Beltrami} and the so called {\it conformality} operator, see Section \ref{section-eigenfunctions}.
\smallskip

In the current study we continue the investigation and apply the above mentioned method to the classical Riemannian symmetric spaces $$\SU n/\SO n,\ \Sp{n}/\U n,\ \SO{2n}/\U n\ \ \text{and}\ \ \SU{2n}/\Sp n.$$ 

In the first case we construct a complex $(n-1)$-dimen\-sional family 
$$\F_1=\{\phi_a^{-1}(\{0\})\ |\ [a]\in\cn P^{n-1}\}$$
of minimal submanifolds of $\SU n/\SO n$ of codimension two.

\begin{theorem}\label{theorem-SUSO}
For $n\ge 2$, let $a$ be a non-zero element of $\cn^n$ and $A\in\cn^{n\times n}$ with $A=a^t a$. Further let $\phi_a:\SU{n}/\SO{n}\to\cn$ be the complex-valued eigenfunction induced by its $\SO n$-invariant lift $\hat\phi_a:\SU n\to\cn$ given by
$$\hat\phi_a(z)=\trace(A z z^t).$$
Then the origin $0\in\cn$ is a regular value of $\phi_a$, so the compact fibre $\phi_a^{-1}(\{0\})$ is a minimal submanifold of $\SU{n}/\SO{n}$ of codimension two.
\end{theorem}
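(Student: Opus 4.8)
The plan is to verify that $\hat\phi_a$ descends to a well-defined complex-valued eigenfunction $\phi_a$ on the symmetric space $\SU n/\SO n$, and then to locate a point in $\phi_a^{-1}(\{0\})$ at which the differential of $\phi_a$ is surjective onto $\C$; by Theorem \ref{theorem:Bai-Eel-special} (or rather the submersion method of \cite{Gud-Mun-1}) the regular fibre is then automatically a compact minimal submanifold of codimension two. First I would check descent: for $z\in\SU n$ and $k\in\SO n$ one has $\hat\phi_a(zk)=\trace(A\,zk k^t z^t)=\trace(A z z^t)=\hat\phi_a(z)$, using $kk^t=I$, so $\hat\phi_a$ is $\SO n$-invariant on the right and hence factors through the coset space. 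That it is a \emph{complex-valued eigenfunction} in the sense of Section \ref{section-eigenfunctions} — i.e.\ eigen under both the Laplace--Beltrami and the conformality operator — I would obtain from the general machinery for such trace-type functions on compact Lie groups developed in \cite{Gud-Mun-1}, noting that $A=a^t a$ has rank one, which is precisely the algebraic condition that forces the conformality equation $\kappa(\hat\phi_a,\hat\phi_a)=\lambda\,\hat\phi_a^2$ to hold (the rank-one/isotropy condition).

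Next I would establish that $0$ is a regular value. Compactness of $\phi_a^{-1}(\{0\})$ is immediate since $\SU n/\SO n$ is compact. For regularity it suffices to show that whenever $\hat\phi_a(z)=0$ there exists a tangent vector $X\in\su n$ (or rather in a complement of $\so n$, the horizontal space) with $d\hat\phi_a(X^*)\neq 0$, where $X^*$ is the corresponding left-invariant field. Computing $X(\hat\phi_a)(z)=\trace(A(Xz z^t + z z^t X^t)) = \trace(A X z z^t) + \trace(A z z^t X^t)$; writing $w = z^t$ and using $A = a^t a$ this becomes a bilinear expression in $a$ and the columns of $z$. The key point is that the vanishing locus $\{z : \hat\phi_a(z)=0\}$ cannot contain any point where \emph{all} such directional derivatives vanish simultaneously — one shows the common zero set of $\hat\phi_a$ and its horizontal gradient is empty by a linear-algebra argument (if $\langle a, z e_j\rangle = 0$ for enough indices $j$ one forces $a=0$, contradicting $a\neq 0$, since the columns $ze_j$ span $\C^n$). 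I would make this precise by exhibiting, for each $z$ with $\hat\phi_a(z)=0$, an explicit skew-Hermitian or (horizontal) symmetric $X$ with $X(\hat\phi_a)(z)\neq 0$.

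The main obstacle I anticipate is the regularity argument: one must be careful about which directions count as horizontal (the reductive complement $\m$ of $\so n$ in $\su n$, consisting of the symmetric trace-free imaginary matrices) and verify that the derivative along $\m$ — not merely along all of $\su n$ — is nonzero at every zero of $\hat\phi_a$. Equivalently, one needs that a vertical direction (in $\so n$) never "absorbs" all the nontrivial variation of $\hat\phi_a$ at such points; this is where the rank-one structure $A=a^ta$ and the symmetry $zz^t$ interact, and getting the bookkeeping of transposes versus conjugate-transposes right is the delicate part. A clean way to finish is to reduce, using the $\SU n$-action (which acts transitively on the relevant configuration of $z$ and moves $a$ by $z\mapsto uz$, $a \mapsto a u^{t}$ up to the invariance), to a normal form for $z$ — say $z$ close to a permutation-type matrix — where $\hat\phi_a(z)=0$ becomes one explicit equation on the entries of $a$ and one exhibits a single coordinate direction along $\m$ in which $\hat\phi_a$ has nonzero derivative, completing the proof.
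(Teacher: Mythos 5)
Your overall strategy for regularity---compute the derivatives of $\hat\phi_a$ along the horizontal complement of $\so n$ in $\su n$ and show they cannot all vanish on the zero set---is the same as the paper's, which works with the decomposition $\su n=\so n\oplus i\p$, $\p$ the symmetric trace-free real matrices spanned by the $X_{rs}$ and $D_{rs}$. However, your proposal has a genuine gap that the paper's proof addresses explicitly: you never show that $0$ actually lies in the image of $\phi_a$. Observing that the fibre is compact is not enough; it could be empty, in which case Theorem \ref{theorem-Gud-Mun-1} (which requires $0\in\phi(M)$) does not apply and the theorem would be vacuous. The paper settles this by invoking Corollary \ref{corollary-phi0} (the Riedler--Siffert criterion): since $\lambda=-2(n^2+n-2)/n$ and $\mu=-4(n-1)/n$ differ for $n\ge 2$, the eigenfunction must vanish somewhere. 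Nothing in your argument substitutes for this step.

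A secondary concern is your linear-algebra sketch for the regularity itself. The derivative of $\hat\phi_a$ along the horizontal direction $iX_{rs}$ is, up to a constant, the \emph{product} $\bigl(\sum_k a_kz_{kr}\bigr)\bigl(\sum_k a_kz_{ks}\bigr)$, so the vanishing of all these derivatives only forces at most one of the $n$ linear forms $(az)_j=\sum_k a_kz_{kj}$ to be nonzero---it does not force them all to vanish, so the argument ``the columns of $z$ span $\C^n$, hence $a=0$'' does not yet close. One must additionally use the diagonal horizontal directions $iD_{rs}$, whose derivatives are $(az)_r^2-(az)_s^2$ up to a constant, to kill the last surviving component and reach the contradiction $az=0$. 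This is exactly how the paper completes the argument (and note it does so at \emph{every} point of $\SU n$, not only on the fibre, showing $\hat\phi_a$ is a global submersion); your proposed reduction to a normal form via the $\SU n$-action is not needed and is left too vague to assess.
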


For the Riemannian symmetric space $\Sp{n}/\U{n}$ we construct a complex $(2n-1)$-dimensional family 
$$\F_2=\{\phi_a^{-1}(\{0\})\ |\ [a]\in\cn P^{2n-1}\}$$
of minimal submanifolds of codimension two. 

\begin{theorem}\label{theorem-SpU}
Let $a$ be a non-zero element of $\cn^{2n}$ and $A\in\cn^{2n\times 2n}$ with $A=a^t a$. Further let $\phi_a:\Sp{n}/\U{n}\to\cn$ be the complex-valued eigenfunction induced by its $\U n$-invariant lift $\hat\phi_a:\Sp n\to\cn$ given by
$$\hat\phi_a(q)=\trace(A qq^t).$$
Then the origin $0\in\cn$ is a regular value of $\phi_a$, so the compact fibre $\phi_a^{-1}(\{0\})$ is a minimal submanifold of $\Sp{n}/\U{n}$ of codimension two.
\end{theorem}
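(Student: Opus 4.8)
The plan is to reduce the statement to an explicit non-degeneracy check for a complex-valued bilinear form on the symplectic Lie algebra. Since $\hat\phi_a$ descends to $\phi_a$ through the Riemannian submersion $\pi\colon\Sp n\to\Sp n/\U n$ we have $\hat\phi_a=\phi_a\circ\pi$, hence $d\hat\phi_a=d\phi_a\circ d\pi$; as $d\pi$ is onto, the image of $d(\hat\phi_a)_q$ equals the image of $d(\phi_a)_{\pi(q)}$. So it suffices to prove that $d(\hat\phi_a)_q\colon T_q\Sp n\to\cn$ is surjective for every $q$ in the zero fibre of $\hat\phi_a$. Once this is established, $0$ is a regular value of $\phi_a$, the fibre $\phi_a^{-1}(\{0\})$ is a closed, hence compact, embedded submanifold of real codimension two, and its minimality follows, via the method of \cite{Gud-Mun-1} recalled in Theorem~\ref{theorem-Gud-Mun-1}, from the fact (already recorded) that $\phi_a$ is a complex-valued eigenfunction.

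First I would compute the differential. For $Z$ in the Lie algebra $\sp n\subset\mathfrak u(2n)$ the curve $t\mapsto q\exp(tZ)$ has velocity $qZ$ at $t=0$, and using $\exp(tZ)^t=\exp(tZ^t)$ and the cyclic invariance of the trace one gets
\[
d(\hat\phi_a)_q(qZ)=\trace\!\big(Aq(Z+Z^t)q^t\big)=v\,(Z+Z^t)\,v^t,\qquad v:=aq\in\cn^{2n},
\]
viewing $v$ as a row vector and recalling $A=a^ta$. Since $a\neq 0$ and $q$ is invertible we have $v\neq 0$, while $\hat\phi_a(q)=0$ says exactly $vv^t=0$. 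Writing $v=(v_1,v_2)$ with $v_1,v_2\in\cn^n$, a direct computation in the standard realization $\sp n=\{\,\begin{pmatrix} A & B\\ -\bar B & \bar A\end{pmatrix}: A^*=-A,\ B^t=B\,\}$ shows that $Z+Z^t$ ranges exactly over the matrices $\begin{pmatrix} iS & iT\\ iT & -iS\end{pmatrix}$ with $S,T$ arbitrary real symmetric $n\times n$ matrices. Hence the image of $d(\hat\phi_a)_q$ is $i$ times
\[
\mathcal I:=\big\{\,(v_1Sv_1^t-v_2Sv_2^t)+(v_1Tv_2^t+v_2Tv_1^t)\ :\ S,T\in\Sym(n,\rn)\,\big\}\subseteq\cn ,
\]
and since multiplication by $i$ permutes the lines through the origin, the theorem reduces to the claim that $\mathcal I=\cn$ whenever $0\neq v=(v_1,v_2)$ satisfies $v_1v_1^t+v_2v_2^t=0$.

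For this I would argue by contradiction, assuming $\mathcal I$ is contained in a real line $\{w\in\cn:\Re(\xi w)=0\}$ for some $\xi\neq 0$. Testing against the symmetric matrices $E_{jj}$ and $E_{jk}+E_{kj}$ forces $\Re\big(\xi((v_1)_j(v_1)_k-(v_2)_j(v_2)_k)\big)=0$ and $\Re\big(\xi((v_1)_j(v_2)_k+(v_1)_k(v_2)_j)\big)=0$ for all $j,k$. Passing to the coordinates $\gamma_j=(v_1)_j+i(v_2)_j$ and $\delta_j=(v_1)_j-i(v_2)_j$, these two families of conditions combine into the single identity $\xi\,\delta_j\delta_k=-\bar\xi\,\bar\gamma_j\bar\gamma_k$ for all $j,k$. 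If every $\delta_j=0$ this gives $\bar\gamma_j\bar\gamma_k=0$, hence all $\gamma_j=0$ and $v=0$. Otherwise, choosing $j_0$ with $\delta_{j_0}\neq 0$ and setting $k=j_0$ yields $\delta_j=c\,\bar\gamma_j$ for all $j$, with $c=-\bar\xi\bar\gamma_{j_0}/(\xi\delta_{j_0})\neq 0$. But the isotropy condition $vv^t=0$ is precisely $\sum_j\gamma_j\delta_j=0$, which now reads $c\sum_j|\gamma_j|^2=0$; hence all $\gamma_j=0$, so all $\delta_j=0$ and $v=0$ — a contradiction. Therefore no such $\xi$ exists, $\mathcal I=\cn$, and $0$ is a regular value of $\phi_a$.

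The computation of $d\hat\phi_a$ and the block description of $Z+Z^t$ are routine bookkeeping in the quaternionic-to-complex realization. The step that carries the real content is the last one: one must use the full isotropy relation $vv^t=0$ rather than merely $v\neq 0$, and the change of variables $(\gamma_j,\delta_j)$ is what turns the bilinear conditions into the manageable identity $\xi\delta_j\delta_k=-\bar\xi\bar\gamma_j\bar\gamma_k$. I expect this non-degeneracy argument to be the main obstacle, with the analogue of the $\SU n/\SO n$ computation in Theorem~\ref{theorem-SUSO} serving as a useful guide.
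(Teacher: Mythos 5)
Your argument is correct, and it is genuinely different from the one in the paper. The paper does not work on the zero fibre at all: it shows that the $\U n$-invariant lift $\hat\phi_a$ has \emph{no critical points anywhere} on $\Sp n$, in the sense that the complex gradient never vanishes, and it does so using only the $2n$ ``diagonal'' directions $D^a_t, D^b_t\in\B_\p$. In your notation $v=aq=(v_1,v_2)$, these two directions produce the quantities $(v_1)_t^2-(v_2)_t^2$ and $(v_1)_t(v_2)_t$, whose simultaneous vanishing for all $t$ forces $v=aq=0$, contradicting the invertibility of $q$. The step you identified as the real content --- that a nonvanishing complex gradient only gives real rank $\ge 1$, and that one must use the isotropy of $v$ to get rank $2$ --- is genuine, but the paper outsources it to the eigenfunction formalism of Theorem \ref{theorem-Gud-Mun-1}: on the zero fibre $\kappa(\phi_a,\phi_a)=\mu\,\phi_a^2=0$, so the gradient is isotropic there, and a nonzero isotropic gradient $X+iY$ has $|X|=|Y|\neq 0$ with $X\perp Y$, hence real rank $2$. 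Your proof establishes surjectivity of the real differential directly, at the cost of testing against all real symmetric $S,T$ and the $(\gamma_j,\delta_j)$ change of variables, which indeed turns the constraints into $\xi\delta_j\delta_k=-\bar\xi\bar\gamma_j\bar\gamma_k$ and lets the isotropy $\sum_j\gamma_j\delta_j=0$ finish the job; I checked this computation and it is sound. What your version buys is a self-contained regularity proof that does not lean on the conformality identity; what the paper's version buys is brevity, since two directions per index $t$ suffice.

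One small gap: Theorem \ref{theorem-Gud-Mun-1} presupposes $0\in\phi_a(M)$, and you never verify that the fibre is non-empty. This is where the paper invokes Corollary \ref{corollary-phi0}: by Proposition \ref{proposition-eigen-SpU} the eigenvalues are $\lambda=-2(n+1)$ and $\mu=-2$, so $\lambda\neq\mu$ and $\phi_a$ must attain the value $0$ on the compact connected space $\Sp n/\U n$. Add that one line and your proof is complete.
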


In the third case we construct a real $(6n-3)$-dimen\-sional family 
$$\F_3=\{\phi_{a,b}^{-1}(\{0\})\ |\ [a]\in\cn P^{2n-1},\ [b]\in\rn P^{2n-1},\ (a,a)=(a,b)=0\}$$
of minimal submanifolds of $\SO{2n}/\U{n}$ of codimension two.

\begin{theorem}\label{theorem-SOU}
For $n\ge 2$, let $a=u+iv\in\cn^{2n}$ and $b\in\rn^{2n}$ be two linearly independent elements, such that 
$(a,a)=(a,b)=0$. 
Let $A\in\cn^{2n\times 2n}$ be the skew-symmetric matrix $$A=\sum_{j,\alpha=1}^{2n} a_jb_\alpha Y_{j\alpha}.$$ 
Further let $\phi_{a,b}:\SO{2n}/\U{n}\to\cn$ be the complex-valued eigenfunction induced by its $\U n$-invariant lift $\hat\phi_{a,b}:\SO{2n}\to\cn$ given by
$$\hat\phi_{a,b}(x)=\trace(A xJ_n x^t).$$
Then the origin $0\in\cn$ is a regular value of $\phi_{a,b}$, so the compact fibre $\phi_{a,b}^{-1}(\{0\})$ is a minimal submanifold of $\SO{2n}/\U{n}$ of codimension two.
\end{theorem}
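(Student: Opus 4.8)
The plan is to follow the strategy behind Theorems~\ref{theorem-SUSO} and~\ref{theorem-SpU}: verify that $\hat\phi_{a,b}$ is genuinely a complex-valued eigenfunction on $\SO{2n}$ that descends to the quotient $\SO{2n}/\U n$, and then show that $0$ is a regular value by producing, at every point of the fibre, a tangent direction on which $d\phi_{a,b}$ does not vanish. By Theorem~\ref{theorem:Bai-Eel-special} (via the submersion machinery of \cite{Gud-Mun-1} recorded in Theorem~\ref{theorem-Gud-Mun-1}), regularity of the value $0$ together with the eigenfunction property immediately gives that $\phi_{a,b}^{-1}(\{0\})$ is minimal of codimension two, and compactness is inherited from that of $\SO{2n}/\U n$. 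So the real content is (i) the eigenfunction computation and $\U n$-invariance, and (ii) the regularity of $0$.

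For step (i), I would first check that $\hat\phi_{a,b}(x)=\trace(AxJ_nx^t)$ is right $\U n$-invariant: writing the embedding $\U n\hookrightarrow\SO{2n}$ in the usual way so that $k\in\U n$ commutes appropriately with $J_n$, one has $x k J_n (xk)^t = x k J_n k^t x^t = x J_n x^t$ because $kJ_nk^t=J_n$ for $k$ in (the image of) $\U n$; hence $\hat\phi_{a,b}$ is constant on left cosets $x\U n$ and descends to $\phi_{a,b}:\SO{2n}/\U n\to\cn$. Then I would compute the tension field and the conformality operator applied to $\hat\phi_{a,b}$ using the standard orthonormal basis $\{Y_{j\alpha}\}$ of $\so{2n}$ employed in \cite{Gud-Mun-1}: differentiating $x\mapsto\trace(AxJ_nx^t)$ twice along left-invariant vector fields reduces everything to traces of products of $A$, $J_n$ and the basis elements, and the hypotheses that $A$ is skew-symmetric and built from vectors $a,b$ with $(a,a)=(a,b)=0$ are exactly what make the eigenvalue equations close up — in particular the isotropy conditions $(a,a)=(a,b)=0$ should force the "error terms" to vanish so that $\hat\phi_{a,b}$ is a common eigenfunction of the Laplacian and the conformality operator with eigenvalues of the type appearing in Section~\ref{section-eigenfunctions}. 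This is bookkeeping but must be done carefully; it mirrors the corresponding computations in the $\SU n/\SO n$ and $\Sp n/\U n$ cases.

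For step (ii), the regularity of the value $0$, I would argue as follows. Suppose $x\U n$ lies in the fibre, i.e. $\trace(AxJ_nx^t)=0$. Set $Z=xJ_nx^t$, a skew-symmetric matrix with $Z^2=-I$ (since $J_n^2=-I$ and $x\in\SO{2n}$), so $Z$ is again a complex structure compatible with the standard inner product; as $x$ ranges over $\SO{2n}$, $Z$ ranges over all such orthogonal complex structures. Differentiating along a left-invariant field $X\in\so{2n}$ gives $d\hat\phi_{a,b}(xX)=\trace\!\big(A\,(xXJ_nx^t + xJ_nX^tx^t)\big)=\trace\!\big(A\,(xXJ_nx^t - xJ_nXx^t)\big)=\trace\!\big(A\,[\,xXx^t,\ Z\,]\big)$, using $X^t=-X$. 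Hence $d\hat\phi_{a,b}$ vanishes on the whole tangent space at $x\U n$ precisely when $\trace(A[W,Z])=0$ for all skew-symmetric $W$, equivalently when $\trace([Z,A]W)=0$ for all such $W$, i.e. when the skew-symmetric part of $[Z,A]$ vanishes; since both $Z$ and $A$ are skew-symmetric, $[Z,A]$ is itself skew-symmetric, so this says $[Z,A]=0$. Thus I must show: there is no point of the fibre at which $Z=xJ_nx^t$ commutes with $A$. One checks that $A=\sum a_jb_\alpha Y_{j\alpha}$ acts on $\cn^{2n}$ essentially as $w\mapsto b\,(a\cdot w)-a\,(b\cdot w)$ up to normalisation, a rank-two operator whose image is $\span\{a,b\}$. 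If $[Z,A]=0$ then $Z$ preserves the image $\span_{\cn}\{a,b\}$ and its orthogonal complement; combined with $Z$ being an orthogonal complex structure and the relations $(a,a)=(a,b)=0$, a short linear-algebra argument forces $ZJ_n$-type relations that are incompatible with $\trace(AZ)=0$ — that is, commuting with $A$ and lying in the zero-fibre cannot happen simultaneously. Writing $Z a$ and $Zb$ in terms of $a,b$ and the isotropy conditions pins down $\trace(AZ)$ to a nonzero multiple, contradicting membership in the fibre.

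I expect step (ii), the regularity argument, to be the main obstacle: it is the place where the geometric hypotheses ($A$ skew-symmetric, $a=u+iv$, $b$ real, $a,b$ linearly independent, $(a,a)=(a,b)=0$) must all be used simultaneously, and the linear-algebra dichotomy "$[Z,A]=0$ versus $\trace(AZ)=0$" has to be resolved cleanly on the nose rather than generically. The eigenfunction computation in step (i), while lengthy, is routine given the framework of \cite{Gud-Mun-1}; and once both are in place the conclusion is immediate from Theorems~\ref{theorem-Gud-Mun-1} and~\ref{theorem:Bai-Eel-special} together with compactness of $\SO{2n}/\U n$.
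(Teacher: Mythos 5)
Your reduction of the critical-point condition is correct and genuinely different from the paper's argument: writing $Z=xJ_nx^t$, the vanishing of $d\hat\phi_{a,b}$ at $x$ is indeed equivalent to $[Z,A]=0$, whereas the paper works with the real parts of the derivatives along the basis $\{Y^a_{rs},Y^b_{rs}\}$ of $\p$, assembles them together with $\Re\,(a,b)=0$ and $\Re\,\hat\phi_{a,b}(x)=0$ into a $2n\times 2n$ real linear system $M_n\cdot((b,x_1),\dots,(b,x_{2n}))^t=0$, and shows that $M_n$ can be brought to the form $(u,x_1)\cdot I_{2n}+S_n$ with $S_n$ skew-symmetric, hence invertible, forcing $b=0$. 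Your route is cleaner in principle, but the way you propose to finish it would fail. If $[Z,A]=0$ then $Z$ preserves $\mathrm{Im}\,A=\span_\cn\{a,b\}$; writing $Za=\alpha a+\beta b$ and $Zb=\gamma a+\delta b$, the skew-symmetry of $Z$ with respect to the bilinear form, together with $(a,a)=(a,b)=0$ and $(b,b)=|b|^2>0$, gives $\beta=\delta=0$, so $Za=\alpha a$ and $Zb=\gamma a$. But then $\trace(AZ)$ is proportional to $(b,Za)-(a,Zb)=\alpha(a,b)-\gamma(a,a)=0$: the quantity you hope to pin down to a nonzero multiple vanishes automatically, so commuting with $A$ is perfectly compatible with lying over the value $0$ and no contradiction with the fibre condition can arise. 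The contradiction must instead come from $Z^2=-I_{2n}$: one gets $-b=Z^2b=\gamma\,Za=\gamma\alpha\,a$, impossible since $a$ and $b$ are linearly independent and $b\neq 0$. With that replacement your argument closes up, and in fact shows that $\hat\phi_{a,b}$ has no critical points at all, not merely none on the zero fibre (the paper's linear system, by contrast, uses the fibre equation $\Re\,\hat\phi_{a,b}(x)=0$ as one of its $2n$ rows).

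A second, smaller omission: you never verify that the fibre $\phi_{a,b}^{-1}(\{0\})$ is nonempty, and Theorem \ref{theorem-Gud-Mun-1} requires $0\in\phi(M)$. The paper obtains this from Corollary \ref{corollary-phi0}, since the eigenvalues $\lambda=-2(n-1)$ and $\mu=-1$ are distinct for $n\ge 2$. Your step (i) is fine in outline; the paper simply quotes Proposition \ref{proposition-SOU}, observing that $(a,a)=(a,b)=0$ implies the condition $(a,a)(b,b)-(a,b)^2=0$ required there.
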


For the Riemannian symmetric space $\SU{2n}/\Sp{n}$ we construct a complex $(4n-2)$-dimensional family 
$$\F_4=\{\phi_{a,b}^{-1}(\{0\})\ |\ [a],[b]\in\cn P^{2n-1}, [a]\neq [b]\}$$
of minimal submanifolds of codimension two.

\begin{theorem}\label{theorem-SUSp}
For $n\geq2$, let $a,b$ be two linearly independent elements of $\cn^{2n}$ and $A\in\cn^{2n\times 2n}$ be the skew-symmetric matrix $$A=\sum_{j,\alpha=1}^{2n} a_jb_\alpha Y_{j\alpha}.$$     
Further let $\phi_{a,b}:\SU{2n}/\Sp{n}\to\cn$ be the complex-valued eigenfunction induced by its $\Sp n$-invariant lift $\hat\phi_{a,b}:\SU{2n}\to\cn$ given by
$$\hat\phi_{a,b}(z)=\trace(A z J_n z^t).$$
Then the origin $0\in\cn$ is a regular value of $\phi_{a,b}$  and the compact fibre $\phi_{a,b}^{-1}(\{0\})$ is a minimal submanifold of $\SU{2n}/\Sp{n}.$
\end{theorem}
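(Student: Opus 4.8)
\textbf{Proof plan for Theorem~\ref{theorem-SUSp}.}
The plan is to follow the by-now standard strategy from \cite{Gud-Mun-1}: the map $\phi_{a,b}$ is a complex-valued eigenfunction, so by Theorem~\ref{theorem-Gud-Mun-1} (the submersion criterion) it suffices to prove that $0\in\cn$ is a regular value, i.e.\ that $\mathrm{d}\phi_{a,b}$ has rank two at every point of the fibre $\phi_{a,b}^{-1}(\{0\})$; then Theorem~\ref{theorem:Bai-Eel-special} combined with compactness of $\SU{2n}/\Sp n$ delivers the conclusion. First I would reduce the regularity statement on the quotient $\SU{2n}/\Sp n$ to a statement about the $\Sp n$-invariant lift $\hat\phi_{a,b}\colon\SU{2n}\to\cn$: since $\hat\phi_{a,b}$ is constant along the fibres of the projection $\pi\colon\SU{2n}\to\SU{2n}/\Sp n$, its differential descends, and $z$ is a critical point of $\hat\phi_{a,b}$ if and only if $\pi(z)$ is a critical point of $\phi_{a,b}$. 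So the whole problem becomes: show that the $\cn$-linear span of the two real gradients of $\mathfrak{Re}\,\hat\phi_{a,b}$ and $\mathfrak{Im}\,\hat\phi_{a,b}$ is $2$-dimensional at every $z\in\SU{2n}$ with $\hat\phi_{a,b}(z)=0$.

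The next step is to compute the differential of $\hat\phi_{a,b}(z)=\trace(A zJ_n z^t)$ on $\SU{2n}$ explicitly in terms of a basis of left-invariant vector fields. For $X\in\su{2n}$ one has
\[
X(\hat\phi_{a,b})(z)=\trace\bigl(A\,(Xz)\,J_n z^t + A\,z\,J_n (Xz)^t\bigr)=\trace\bigl((z^t A z + z^t A^t z)\,J_n\,\cdot\cdot\bigr),
\]
so after using skew-symmetry of $A$ and rearranging traces the derivative is governed by the matrix $W(z)=z^t A z$ (which is again skew-symmetric). Concretely, writing $w=z^t a\in\cn^{2n}$ one gets $z^t A z=\sum_{j,\alpha}a_j b_\alpha\,z^t Y_{j\alpha}z$, and since $Y_{j\alpha}=E_{j\alpha}-E_{\alpha j}$ this collapses to an expression built from the two vectors $z^t a$ and $z^t b$; the upshot is that $\hat\phi_{a,b}(z)$ and its derivatives are expressible through the bilinear pairings of $z^t a$, $z^t b$ with the rows of $z$ against $J_n$. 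The vanishing condition $\hat\phi_{a,b}(z)=0$ then reads as an orthogonality-type relation between $z^t a$ and $J_n(z^t b)$, and the goal is to show that on this locus the evaluation of $X\mapsto X(\hat\phi_{a,b})(z)$ on $X$ ranging over $\su{2n}$ (equivalently, over the horizontal space $\m$ complementary to $\sp n$) cannot be forced to lie in a single real line in $\cn$.

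To rule out a critical point I would argue by contradiction: if $z$ were a critical point lying in the zero fibre, then $X(\hat\phi_{a,b})(z)=0$ for all $X$ in the relevant subspace, which after the computation above translates into a system of linear equations forcing $z^t a$ and $z^t b$ to be parallel (or one of them zero) in $\cn^{2n}$. Since $z\in\SU{2n}$ is invertible, this would force $a$ and $b$ to be linearly dependent, contradicting the hypothesis $[a]\neq[b]$ in $\cn P^{2n-1}$. Here one must be a little careful to use only the horizontal directions, i.e.\ that it is enough to test $X\in\m$ where $\su{2n}=\sp n\oplus\m$, because $\hat\phi_{a,b}$ is already known to be $\Sp n$-invariant so its vertical derivatives vanish automatically; thus criticality of $\phi_{a,b}$ at $\pi(z)$ is exactly the vanishing of $X(\hat\phi_{a,b})(z)$ for $X\in\m$, and the linear-algebra argument must produce the contradiction using only those $X$. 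I expect this last point---checking that the horizontal directions alone suffice to separate $z^t a$ from $z^t b$, so that the degeneracy genuinely forces $a\parallel b$---to be the main obstacle, together with the bookkeeping needed to handle the case $n=2$ uniformly (the statement explicitly assumes $n\ge 2$). The rest is the routine trace manipulation sketched above, and is entirely analogous to the argument for $\SO{2n}/\U n$ in Theorem~\ref{theorem-SOU}, which uses the same skew-symmetric matrix $A=\sum a_j b_\alpha Y_{j\alpha}$ and the same bilinear structure.
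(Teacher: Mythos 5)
Your plan follows essentially the same route as the paper: pass to the $\Sp n$-invariant lift, test criticality only against the horizontal complement $\p$ of $\sp n$ in $\su{2n}$, translate the resulting equations into proportionality relations between the pairings $(a,z_j)$ and $(b,z_j)$ with the columns of $z$, and derive a contradiction with the linear independence of $a$ and $b$; the paper then invokes Theorem~\ref{theorem-Gud-Mun-1} exactly as you propose. Two remarks on where your sketch is incomplete. First, Theorem~\ref{theorem-Gud-Mun-1} requires $0\in\phi_{a,b}(M)$, i.e.\ that the fibre is non-empty, and nothing in your outline addresses this; the paper gets it from Corollary~\ref{corollary-phi0} (Riedler--Siffert), using that the eigenvalues from Proposition~\ref{proposition-SUSp} satisfy $\lambda\neq\mu$ for $n\ge 2$ (indeed $\lambda=\mu$ would force $n(n-1)=0$). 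Without this step the statement could be vacuous, so it must be included.

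Second, you correctly flag as the ``main obstacle'' the question of whether the horizontal directions alone separate $z^t a$ from $z^t b$. The answer, and the structural point of the paper's proof, is that they do \emph{not} quite suffice: the derivatives along $Y^a_{rs}, Y^b_{rs}, X^a_{rs}, Y^c_{rs}$ yield the relations $(a,z_r)(b,z_{n+s})=(a,z_{n+s})(b,z_r)$, $(a,z_s)(b,z_{n+r})=(a,z_{n+r})(b,z_s)$, $(a,z_r)(b,z_s)=(a,z_s)(b,z_r)$ and $(a,z_{n+r})(b,z_{n+s})=(a,z_{n+s})(b,z_{n+r})$ only for $1\le r<s\le n$, which, after choosing $t$ with $(a,z_t)\neq 0$, determine $(b,z_s)$ for every index $s$ \emph{except} the partner index $n+t$. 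The missing relation $(a,z_t)(b,z_{n+t})=(a,z_{n+t})(b,z_t)$ is supplied precisely by the fibre equation $\hat\phi_{a,b}(z)=0$ after substituting the other relations into it. So the zero-fibre condition is an essential equation of the linear system, not merely a restriction of where criticality is tested; once it is used, expanding $b$ in the basis $\{\bar z_s\}$ gives $b=\tfrac{(b,z_t)}{(a,z_t)}\,a$ and the desired contradiction. With these two points supplied, your argument coincides with the paper's.
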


The proofs of these four results are provided below. 
The readers interested in further details are referred to \cite{Geg-MSc}.

\section{Eigenfunctions and Eigenfamilies}
\label{section-eigenfunctions}

Let $(M,g)$ be an $m$-dimensional Riemannian manifold and $T^{\cn}M$ be the complexification of the tangent bundle $TM$ of $M$. We extend the metric $g$ to a complex bilinear form on $T^{\cn}M$.  Then the gradient $\nabla\phi$ of a complex-valued function $\phi:(M,g)\to\cn$ is a section of $T^{\cn}M$.  In this situation, the well-known complex linear {\it Laplace-Beltrami operator} (alt. {\it tension field}) $\tau$ on $(M,g)$.  In local coordinates this satisfies 
$$
\tau(\phi)=\Div (\nabla \phi)=\sum_{i,j=1}^m\frac{1}{\sqrt{|g|}} \frac{\partial}{\partial x_j}
\left(g^{ij}\, \sqrt{|g|}\, \frac{\partial \phi}{\partial x_i}\right).
$$
For two complex-valued functions $\phi,\psi:(M,g)\to\cn$ we have the following well-known fundamental relation
\begin{equation*}\label{equation-basic}
\tau(\phi\, \psi)=\tau(\phi)\,\psi +2\,\kappa(\phi,\psi)+\phi\,\tau(\psi),
\end{equation*}
where the complex bilinear {\it conformality operator} $\kappa$ is given by $$\kappa(\phi,\psi)=g(\nabla \phi,\nabla \psi).$$  Locally this satisfies 
$$\kappa(\phi,\psi)=\sum_{i,j=1}^mg^{ij}\cdot\frac{\partial\phi}{\partial x_i}\frac{\partial \psi}{\partial x_j}.$$

\begin{definition}\cite{Gud-Sak-1}\label{definition-eigenfamily}
Let $(M,g)$ be a Riemannian manifold. Then a complex-valued function $\phi:M\to\cn$ is said to be a {\it $(\lambda,\mu)$-eigenfunction} if it is eigen both with respect to the Laplace-Beltrami operator $\tau$ and the conformality operator $\kappa$ i.e. there exist complex numbers $\lambda,\mu\in\cn$ such that $$\tau(\phi)=\lambda\cdot\phi\ \ \text{and}\ \ \kappa(\phi,\phi)=\mu\cdot \phi^2.$$	
A set $\E =\{\phi_i:M\to\cn\ |\ i\in I\}$ of complex-valued functions is said to be a {\it $(\lambda,\mu)$-eigenfamily} on $M$ if there exist complex numbers $\lambda,\mu\in\cn$ such that for all $\phi,\psi\in\E$ we have 
$$\tau(\phi)=\lambda\cdot\phi\ \ \text{and}\ \ \kappa(\phi,\psi)=\mu\cdot \phi\,\psi.$$ 
\end{definition}
\medskip

For the standard odd-dimensional round spheres we have the following eigenfamilies based on the classical real-valued spherical harmonics.

\begin{example}\cite{Gud-Mun-1}\label{example-basic-sphere} 
Let $S^{2n-1}$ be the odd-dimensional unit sphere in the standard Euclidean space $\cn^{n}\cong\rn^{2n}$ and define $\phi_1,\dots,\phi_n:S^{2n-1}\to\cn$ by
$$\phi_j:(z_1,\dots,z_{n})\mapsto \frac{z_j}{\sqrt{|z_1|^2+\cdots +|z_n|^2}}.$$  Then the tension field $\tau$ and the conformality operator $\kappa$ on $S^{2n-1}$ satisfy
$$\tau(\phi_j)=-\,(2n-1)\cdot\phi_j\ \ \text{and}\ \ \kappa(\phi_j,\phi_k)=-\,1\cdot \phi_j\cdot\phi_k.$$
\end{example}
\medskip

For the standard complex projective space $\cn P^n$ we have a complex multi-dimensional eigenfamily, in a similar way.

\begin{example}\cite{Gud-Mun-1}\label{example-basic-projective-space}
Let $\cn P^n$ be the standard $n$-dimensional complex projective space. For a fixed integer $1\le\alpha < n+1$ and some $1\le j\le\alpha < k\le n+1$  define the function $\phi_{jk}:\cn P^n\to\cn$ by
$$\phi_{jk}:[z_1,\dots,z_{n+1}]\mapsto \frac
{z_j\cdot\bar z_k}{z_1\cdot \bar z_1+\cdots + z_{n+1}\cdot \bar z_{n+1}}.$$  
Then the tension field $\tau$ and the conformality operator $\kappa$ on $\cn P^n$ satisfy
$$\tau(\phi_{jk})=-\,4(n+1)\cdot\phi_{jk}\ \ \text{and}\ \ \kappa(\phi_{jk},\phi_{lm})=-\,4\cdot \phi_{jk}\cdot\phi_{lm}.$$
\end{example}

In recent years, explicit eigenfamilies of complex-valued functions have been found on all the classical compact Riemannian symmetric spaces. For this see Table \ref{table-eigenfamilies}.

\renewcommand{\arraystretch}{2}
\begin{table}[h]\label{table-eigenfamilies}
	\makebox[\textwidth][c]{
		\begin{tabular}{cccc}
			\midrule
			\midrule
$G/K$	& $\lambda$ & $\mu$ & Eigenfunctions \\
\midrule
\midrule
$\SO n$ & $-\,\frac{(n-1)}2$ & $-\,\frac 12$ & 
see \cite{Gud-Sak-1}\\
\midrule
$\SU n$ & $-\,\frac{n^2-1}n$ & $-\,\frac{n-1}n$ & 
see \cite{Gud-Sob-1} \\
\midrule
$\Sp n$ & $-\,\frac{2n+1}2$ & $-\,\frac 12$ & 
see \cite{Gud-Sak-1} \\
\midrule
$\SU n/\SO n$ & $-\,\frac{2(n^2+n-2)}{n}$& $-\,\frac{4(n-1)}{n}$ & 
see \cite{Gud-Sif-Sob-2} \\
\midrule
$\Sp n/\U n$ & $-\,2(n+1)$ & $-\,2$ & 
see \cite{Gud-Sif-Sob-2} \\
\midrule
$\SO{2n}/\U n$ & $-\,2(n-1)$ & $-1$ & 
see \cite{Gud-Sif-Sob-2} \\
\midrule
$\SU{2n}/\Sp n$ & $-\,\frac{2(2n^2-n-1)}{n}$ & $-\,\frac{2(n-1)}{n}$ & 
see \cite{Gud-Sif-Sob-2} \\
\midrule
$\SO{m+n}/\SO m\times\SO n$ & $-(m+n)$ & $-2$ & 
see \cite{Gha-Gud-4} \\
\midrule
$\U{m+n}/\U m\times\U n$ & $-2(m+n)$ & $-2$ & 
see \cite{Gha-Gud-5} \\
\midrule
$\Sp{m+n}/\Sp m\times\Sp n$ & $-2(m+n)$ & $-1$ & 
see \cite{Gha-Gud-5} \\
\midrule
\midrule
\end{tabular}	
}
\bigskip
\caption{Eigenfamilies on the classical compact irreducible Riemannian symmetric spaces.}
\label{table-eigenfamilies}	
\end{table}
\renewcommand{\arraystretch}{1}

We conclude this section with the following two results, particularly useful in the above mentioned situations of compact Riemannian symmetric spaces.

\begin{proposition}\label{proposition-lift}
Let $\pi:(\hat M,\hat g)\to (M,g)$ be a harmonic Riemannian submersion between Riemannian manifolds. Further let $\phi:(M,g)\to\C$ be a smooth function and $\hat\phi:(\hat M,\hat g)\to\C$ be the composition $\hat\phi=\phi\circ\pi$. Then the corresponding tension fields $\tau$ and conformality operators $\kappa$  satisfy
$$\tau(\hat\phi)=\tau(\phi)\circ\pi\ \ \text{and}\ \
\kappa(\hat\phi,\hat\psi) = \kappa(\phi,\psi)\circ\pi.$$
\end{proposition}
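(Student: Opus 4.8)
The plan is to reduce both identities to two structural facts: that a Riemannian submersion $\pi$ restricts, at each point $\hat p\in\hat M$, to a linear isometry $d\pi_{\hat p}\colon\mathcal{H}_{\hat p}\to T_{\pi(\hat p)}M$ from the horizontal subspace onto the tangent space of the base, and that $\pi$ being harmonic means precisely $\tau(\pi)=0$. Since $\tau$ is $\C$-linear and $\kappa$ is $\C$-bilinear, it suffices to verify the statements for a real-valued $\phi$ (resp. real-valued $\phi,\psi$) and then extend; I would record this reduction at the outset and afterwards argue in the real case, writing $\hat\psi=\psi\circ\pi$ for the second function.

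For the tension field, I would apply the composition law for tension fields (see \cite{Bai-Woo-book}): with $\hat\phi=\phi\circ\pi$ one has
$$\tau(\hat\phi)=d\phi\bigl(\tau(\pi)\bigr)+\trace_{\hat g}\,\nabla d\phi\bigl(d\pi\,\cdot\,,\,d\pi\,\cdot\,).$$
Harmonicity of $\pi$ annihilates the first term. For the second, fix $\hat p$, let $\{e_1,\dots,e_m\}$ be a local orthonormal frame on $M$ near $\pi(\hat p)$ and $\{X_1,\dots,X_m\}$ its horizontal lift, completed by a vertical orthonormal frame $\{V_1,\dots,V_k\}$ to an orthonormal frame of $\hat M$. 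Since the $V_j$ lie in $\ker d\pi$ and $d\pi(X_i)=e_i\circ\pi$, and since $\nabla d\phi$ is just the Hessian of $\phi$ (the target $\C$ being flat), the trace collapses to $\sum_{i=1}^m(\nabla d\phi)(e_i,e_i)\circ\pi=\tau(\phi)\circ\pi$. This yields $\tau(\hat\phi)=\tau(\phi)\circ\pi$.

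For the conformality operator, I would first identify the gradient of a lifted function with the horizontal lift of the gradient: for any vertical $V$ we have $\hat g(\nabla\hat\phi,V)=d\hat\phi(V)=d\phi(d\pi V)=0$, so $\nabla\hat\phi$ is horizontal, while for a horizontal $\hat X$ with $d\pi(\hat X)=X$ we have $\hat g(\nabla\hat\phi,\hat X)=d\hat\phi(\hat X)=d\phi(X)=g(\nabla\phi,X)$, which by the isometry property says exactly that $\nabla\hat\phi$ is the horizontal lift of $\nabla\phi$. Doing the same for $\hat\psi$ and invoking the horizontal-isometry property once more gives
$$\kappa(\hat\phi,\hat\psi)=\hat g(\nabla\hat\phi,\nabla\hat\psi)=g(\nabla\phi,\nabla\psi)\circ\pi=\kappa(\phi,\psi)\circ\pi.$$
There is no genuine obstacle: both identities are immediate consequences of O'Neill's horizontal-isometry property together with $\tau(\pi)=0$, and the only point requiring care is the bookkeeping for complex-valued functions, which is dispatched by the initial reduction to real and imaginary parts.
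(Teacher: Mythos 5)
Your proof is correct and complete. The paper itself does not give an argument here --- it simply defers to \cite{Gud-Sif-Sob-2} --- and the proof given there is the same standard one you use: the composition law $\tau(\phi\circ\pi)=d\phi(\tau(\pi))+\trace_{\hat g}\nabla d\phi(d\pi\,\cdot,d\pi\,\cdot)$ with the first term killed by harmonicity, an adapted orthonormal frame of horizontal lifts and vertical vectors collapsing the trace, and the identification of $\nabla\hat\phi$ with the horizontal lift of $\nabla\phi$ via O'Neill's horizontal isometry. Your reduction to real-valued functions is a harmless bit of bookkeeping (the gradient argument also works directly on the complexified tangent bundle with the complex-bilinear extension of the metric); nothing is missing.
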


\begin{proof} The arguments needed here can be found in \cite{Gud-Sif-Sob-2}.
\end{proof}

In the sequel, we shall apply the following immediate consequence of Proposition \ref{proposition-lift}.

\begin{corollary}\label{corollary-lift-eigen}
Let $\pi:(\hat M,\hat g)\to (M,g)$ be a harmonic Riemannian submersion.  For a complex-valued function smooth function $\phi:(M,g)\to\C$ let $\hat \phi:(\hat M,\hat g)\to\C$ be the composition $\hat \phi=\phi\circ\pi$. Then the following statements are equivalent
\begin{enumerate}
\item[(i)] 
$\phi:M\to\C$ is a $(\lambda,\mu)$-eigenfunction on $M$,
\item[(ii)] 
$\hat\phi:\hat M\to\C$ is a $(\lambda,\mu)$-eigenfunction on $\hat M$.
\end{enumerate}
\end{corollary}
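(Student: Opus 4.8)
The plan is to deduce the equivalence directly from the two identities supplied by Proposition \ref{proposition-lift}, unwinding the definition of a $(\lambda,\mu)$-eigenfunction in each direction. Recall that $\phi$ is a $(\lambda,\mu)$-eigenfunction precisely when $\tau(\phi)=\lambda\,\phi$ and $\kappa(\phi,\phi)=\mu\,\phi^2$, and correspondingly $\hat\phi$ is such an eigenfunction exactly when $\tau(\hat\phi)=\lambda\,\hat\phi$ and $\kappa(\hat\phi,\hat\phi)=\mu\,\hat\phi^2$. Since Proposition \ref{proposition-lift} translates each of the two operators between base and total space along $\pi$, the whole argument reduces to composing with $\pi$ and, in the reverse direction, cancelling it.

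First I would treat the implication (i) $\Rightarrow$ (ii). Assuming $\tau(\phi)=\lambda\,\phi$, Proposition \ref{proposition-lift} gives
$$\tau(\hat\phi)=\tau(\phi)\circ\pi=(\lambda\,\phi)\circ\pi=\lambda\,(\phi\circ\pi)=\lambda\,\hat\phi.$$
Applying the second identity of Proposition \ref{proposition-lift} with $\psi=\phi$, so that $\hat\psi=\hat\phi$, together with $\kappa(\phi,\phi)=\mu\,\phi^2$ yields
$$\kappa(\hat\phi,\hat\phi)=\kappa(\phi,\phi)\circ\pi=(\mu\,\phi^2)\circ\pi=\mu\,(\phi\circ\pi)^2=\mu\,\hat\phi^2.$$
Hence $\hat\phi$ is a $(\lambda,\mu)$-eigenfunction on $\hat M$, which is the desired implication.

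For the converse (ii) $\Rightarrow$ (i) I would run the same two identities backwards. From $\tau(\hat\phi)=\lambda\,\hat\phi$ and Proposition \ref{proposition-lift} we obtain $\tau(\phi)\circ\pi=\lambda\,(\phi\circ\pi)=(\lambda\,\phi)\circ\pi$, and likewise $\kappa(\phi,\phi)\circ\pi=(\mu\,\phi^2)\circ\pi$. The only point requiring care—and the place where I expect the one genuine step of the argument to lie—is the cancellation of the factor $\pi$ on both sides of these equalities, since a priori they assert equality only of the lifted functions on $\hat M$. Here one uses that $\pi$ is surjective onto $M$, as is the case for the bundle projections $\pi:G\to G/K$ to which the corollary is applied: for every $p\in M$ there is a point $q\in\hat M$ with $\pi(q)=p$, and evaluating the two equalities at $q$ gives $\tau(\phi)(p)=\lambda\,\phi(p)$ and $\kappa(\phi,\phi)(p)=\mu\,\phi(p)^2$. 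As $p$ is arbitrary, $\phi$ is a $(\lambda,\mu)$-eigenfunction on $M$, and the equivalence of (i) and (ii) follows.
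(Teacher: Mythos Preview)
Your argument is correct and is exactly the unwinding the paper has in mind: the corollary is stated there as an ``immediate consequence'' of Proposition~\ref{proposition-lift} with no further proof, and your two directions are precisely that immediate deduction. The only extra ingredient you make explicit, namely the surjectivity of $\pi$ needed for (ii)$\Rightarrow$(i), is indeed part of the standing assumption (a Riemannian submersion is surjective), so nothing is missing.
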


\section{Minimal Submanifolds via Eigenfunctions}
\label{section-minimal-submanifolds}

The recent paper \cite{Gud-Mun-1} provides a new application of complex-valued eigenfunctions.  This is a method for constructing minimal submanifolds of codimension two.

\begin{theorem}
{\rm \cite{Gud-Mun-1}}\label{theorem-Gud-Mun-1}
Let $\phi:(M,g)\to\cn$ be a complex-valued eigenfunction on a Riemannian manifold, such that $0\in\phi(M)$ is a regular value for $\phi$.  Then the fibre $\phi^{-1}(\{0\})$ is a minimal submanifold of $M$ of codimension two.
\end{theorem}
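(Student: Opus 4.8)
The plan is to prove minimality of the fibre directly, by computing its mean curvature vector $H$ from the two defining relations $\tau(\phi)=\lambda\,\phi$ and $\kappa(\phi,\phi)=\mu\,\phi^2$ of a $(\lambda,\mu)$-eigenfunction and showing that it vanishes identically. Write $\phi=u+iv$ with $u,v$ real-valued. Since $0$ is a regular value, $du$ and $dv$ are linearly independent along $\Sigma:=\phi^{-1}(\{0\})$, so $\Sigma$ is a smooth submanifold of real codimension two whose normal space is spanned pointwise by $\nabla u$ and $\nabla v$. First I would record the pointwise consequences of the two eigen-relations on $\Sigma$. Because $\tau$ maps real functions to real functions, $\tau(\phi)=\tau(u)+i\,\tau(v)$; as $\phi$ vanishes on $\Sigma$, the relation $\tau(\phi)=\lambda\phi$ forces $\tau(u)=\tau(v)=0$ there. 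Splitting the conformality relation $g(\nabla\phi,\nabla\phi)=\mu\,\phi^2$ into real and imaginary parts gives the two globally valid identities
\[
|\nabla u|^2-|\nabla v|^2=\mu\,(u^2-v^2),\qquad g(\nabla u,\nabla v)=\mu\,u\,v,
\]
so that along $\Sigma$ the normal frame $\{\nabla u,\nabla v\}$ is orthogonal and of common length $\lambda:=|\nabla u|=|\nabla v|>0$; these are exactly the horizontal conformality conditions for $\phi$ restricted to the fibre.

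Next I would convert $H$ into Hessian data of the defining functions. Writing $\nabla^2 u(\cdot,\cdot)=g(\nabla_{\cdot}\nabla u,\cdot)$ for the Hessian and taking an orthonormal tangent frame $\{X_i\}$ of $\Sigma$, differentiation of $g(\nabla u,X_i)=X_i(u)=0$ yields $\langle\nabla_{X_i}X_i,\nabla u\rangle=-\nabla^2 u(X_i,X_i)$, whence for the unit normal $e_1=\nabla u/\lambda$ one has $\langle H,e_1\rangle=-\tfrac1\lambda\sum_i\nabla^2 u(X_i,X_i)$. Completing $\{X_i\}$ to a full orthonormal frame by $e_1$ and $e_2=\nabla v/\lambda$ and using $\tau(u)=\trace\nabla^2 u=0$ on $\Sigma$, this rewrites as
\[
\langle H,e_1\rangle=\tfrac1{\lambda^{3}}\bigl(\nabla^2 u(\nabla u,\nabla u)+\nabla^2 u(\nabla v,\nabla v)\bigr),
\]
and symmetrically $\langle H,e_2\rangle=\tfrac1{\lambda^{3}}\bigl(\nabla^2 v(\nabla u,\nabla u)+\nabla^2 v(\nabla v,\nabla v)\bigr)$. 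Minimality is thereby reduced to the vanishing on $\Sigma$ of these two sums of normal-direction Hessians.

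The crux is this last step, and here the two conformality identities are differentiated. Applying the derivations $\nabla u$ and $\nabla v$ to the identities displayed above, and using that every term on their right-hand sides carries a factor $u$ or $v$ and hence dies on $\Sigma$, I expect to obtain along $\Sigma$ the linear relations $\nabla^2 u(\nabla u,\nabla u)=\nabla^2 v(\nabla u,\nabla v)$ and $\nabla^2 u(\nabla v,\nabla v)=-\nabla^2 v(\nabla u,\nabla v)$, together with the two companions obtained by interchanging $u$ and $v$. Adding the first pair annihilates $\langle H,e_1\rangle$, and adding their companions annihilates $\langle H,e_2\rangle$, so $H\equiv 0$ and $\Sigma$ is minimal of codimension two. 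I anticipate that the bookkeeping in this step is the main obstacle: one must differentiate the identities in their globally valid form rather than their restriction to $\Sigma$, and track signs carefully so that the product-rule terms cancel precisely. Conceptually the argument realises an infinitesimal, eigenfunction version of the Baird--Eells principle of Theorem \ref{theorem:Bai-Eel-special}, the point being that although $\phi$ is neither harmonic nor horizontally conformal on $M$, both properties hold to the order needed along the single fibre $\phi^{-1}(\{0\})$.
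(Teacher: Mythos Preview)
Your argument is correct. Writing $\phi=u+iv$, the two eigen-relations restricted to $\Sigma=\phi^{-1}(\{0\})$ give $\tau(u)=\tau(v)=0$ and the orthogonality $|\nabla u|=|\nabla v|$, $g(\nabla u,\nabla v)=0$; your mean-curvature formula in terms of normal Hessians is standard, and differentiating the global conformality identities in the directions $\nabla u$ and $\nabla v$ indeed yields, on $\Sigma$, exactly the relations
\[
\nabla^2 u(\nabla u,\nabla u)=\nabla^2 v(\nabla u,\nabla v),\qquad \nabla^2 u(\nabla v,\nabla v)=-\nabla^2 v(\nabla u,\nabla v),
\]
together with their $(u\leftrightarrow v)$ companions, so that both normal components of $H$ vanish.

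There is, however, nothing in the present paper to compare your proof against: Theorem~\ref{theorem-Gud-Mun-1} is quoted from \cite{Gud-Mun-1} and used as a tool, with no proof supplied here. Your direct Hessian computation is in the spirit of the Baird--Eells argument you cite, and is essentially how the result is established in the source.

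Two cosmetic points worth tidying. First, you overload the symbol $\lambda$, using it both for the $\tau$-eigenvalue and for the common length $|\nabla u|=|\nabla v|$ along $\Sigma$; choose a different letter for the latter. Second, your displayed splitting $|\nabla u|^2-|\nabla v|^2=\mu(u^2-v^2)$ and $g(\nabla u,\nabla v)=\mu\,uv$ tacitly assumes $\mu\in\rn$. For general $\mu\in\cn$ the real and imaginary parts mix, but every term on the right-hand side still carries a factor of $u$ or $v$, so the first derivatives of both identities still vanish on $\Sigma$ and the rest of your computation is unaffected.
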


The main aim of our work is to apply Theorem \ref{theorem-Gud-Mun-1} in several of the interesting cases when the Riemannian manifold $(M,g)$ is one of the classical compact symmetric spaces.

The next result, from Riedler and Siffert's paper \cite{Rie-Sif} supplies us with a straightforward way of checking whether an eigenfunction, on a compact and connected Riemannian manifold, attains the required value $0\in\cn$.

\begin{theorem}{\rm \cite{Rie-Sif}}\label{phi0}
Let $(M,g)$ be a compact and connected Riemannian manifold and let $\phi:M\rightarrow\cn$ be a $(\lambda,\mu)$-eigenfunction not identically zero. Then the following are equivalent.
\begin{enumerate}
\item $\lambda=\mu$.
\item $\rvert\phi\rvert^2$ is constant.
\item $\phi(x)\neq0$ for all $x\in M.$
\end{enumerate}
\end{theorem}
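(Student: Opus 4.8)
The plan is to establish the cyclic chain $(1)\Rightarrow(2)\Rightarrow(3)\Rightarrow(1)$, with the whole argument resting on a single local identity. On the open set $U=\{x\in M\ |\ \phi(x)\neq0\}$ the $\cn$-valued vector field $W=\nabla\phi/\phi$ is smooth and well-defined, and the Leibniz rule for the divergence gives
$$\Div(W)=\frac{\Div(\nabla\phi)}{\phi}-\frac{g(\nabla\phi,\nabla\phi)}{\phi^2}=\frac{\tau(\phi)}{\phi}-\frac{\kappa(\phi,\phi)}{\phi^2}=\lambda-\mu.$$
Equivalently, the locally defined function $\log\phi$ satisfies $\tau(\log\phi)=\lambda-\mu$ on $U$; and since $\tau$ is complex linear and sends real functions to real functions, taking real parts shows that the globally defined $\log|\phi|=\Re(\log\phi)$ obeys $\tau(\log|\phi|)=\Re(\lambda-\mu)$ there. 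Deriving this identity, and checking that $\tau$ and $\kappa$ behave under these manipulations as the formal calculation suggests, is the core computation; everything else follows from it together with compactness.

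For $(3)\Rightarrow(1)$: if $\phi$ never vanishes then $U=M$, so $W$ is a smooth vector field on the closed manifold $M$, and the divergence theorem forces $\int_M\Div(W)\,dv=0$. As $\Div(W)=\lambda-\mu$ is constant, this yields $(\lambda-\mu)\cdot\mathrm{Vol}(M)=0$, hence $\lambda=\mu$; note that this recovers the full complex equality, not merely its real part. For $(2)\Rightarrow(3)$: if $|\phi|^2$ is constant then, since $\phi\not\equiv0$, this constant is positive, so $\phi(x)\neq0$ for every $x\in M$. These two implications are routine.

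For $(1)\Rightarrow(2)$: assuming $\lambda=\mu$, the identity above gives $\tau(\log|\phi|)=0$ on $U$, so $\log|\phi|$ is a genuine harmonic function there. By compactness the continuous function $|\phi|$ attains its maximum at some $p\in M$, and since $\phi\not\equiv0$ this maximum is strictly positive, so $p\in U$. The strong maximum principle, applied on the connected component $U_0\subseteq U$ containing $p$, shows that $\log|\phi|$, and hence $|\phi|$, is constant on $U_0$. A short connectedness argument then excludes zeros altogether: the boundary $\partial U_0$ is contained in the zero set $\{\phi=0\}$, where $|\phi|=0$, yet continuity forces $|\phi|$ to equal the positive constant on $\overline{U_0}$; therefore $\partial U_0=\emptyset$, so $U_0$ is open and closed in the connected manifold $M$, giving $U_0=M$ and $|\phi|^2$ constant. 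This closes the cycle.

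I expect $(1)\Rightarrow(2)$ to be the main obstacle. Away from the zeros the argument reduces to the elementary fact that $\log|\phi|$ is harmonic, but this function degenerates to $-\infty$ along any zeros of $\phi$, so one cannot simply integrate over $M$ as in $(3)\Rightarrow(1)$. The combination of the maximum principle with the connectedness of $M$ is precisely what promotes ``harmonic with an interior maximum'' into ``nowhere zero of constant modulus''. The only remaining care is bookkeeping: confirming that passing to a local branch of $\log\phi$ and taking real parts is legitimate, i.e. that $\tau$ commutes with $\Re$ as used above.
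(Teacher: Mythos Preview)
Your argument is correct, but there is nothing in the paper to compare it against: Theorem~\ref{phi0} is merely quoted from \cite{Rie-Sif} and stated without proof, the authors needing only its consequence Corollary~\ref{corollary-phi0}. On its own merits your cyclic scheme is sound. The divergence identity $\Div(\nabla\phi/\phi)=\lambda-\mu$ on $\{\phi\neq0\}$ is the right engine; $(3)\Rightarrow(1)$ then drops out of Stokes' theorem on the closed manifold (applied to the real and imaginary parts of $W$ separately, which is what justifies recovering the full complex equality $\lambda=\mu$), $(2)\Rightarrow(3)$ is trivial, and for $(1)\Rightarrow(2)$ the combination of the strong maximum principle for the harmonic function $\log|\phi|$ with the clopen argument on the connected manifold $M$ is exactly what is needed to exclude zeros and force $|\phi|$ to be globally constant. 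The bookkeeping you flag---that $\tau$ commutes with taking real parts because it is the $\cn$-linear extension of a real operator, and that $\log\phi$ is well-defined up to an additive constant so its Laplacian is unambiguous---is routine and correctly handled.
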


As an obvious consequence we have the following.

\begin{corollary}\label{corollary-phi0}
If $\phi:M\rightarrow\cn$ is a complex-valued $(\lambda,\mu)$-eigenfunction on a compact and connected Riemannian manifold $(M,g)$ such that $\lambda\neq\mu,$ then there exists $x\in M$ such that $\phi(x)=0$.
\end{corollary}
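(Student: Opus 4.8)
The plan is to argue by contraposition directly from Theorem \ref{phi0}. First I would dispose of the degenerate case: if $\phi$ is identically zero on $M$, then trivially $\phi(x)=0$ for every $x\in M$ and there is nothing to prove. Hence I may assume $\phi$ is not identically zero, which is precisely the hypothesis under which Theorem \ref{phi0} applies.

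Next, suppose for contradiction that $\phi(x)\neq 0$ for all $x\in M$. Since $(M,g)$ is compact and connected and $\phi$ is a $(\lambda,\mu)$-eigenfunction not identically zero, statement (3) of Theorem \ref{phi0} holds, so by the equivalence (3) $\Leftrightarrow$ (1) we conclude $\lambda=\mu$. This contradicts the standing assumption $\lambda\neq\mu$. Therefore there exists $x\in M$ with $\phi(x)=0$, which is the assertion.

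I do not anticipate any genuine obstacle: the corollary is an immediate logical consequence of Theorem \ref{phi0}, and the only point meriting a word of care is that Theorem \ref{phi0} excludes the identically-zero function from its hypotheses, a case handled trivially at the outset. One could equivalently phrase the whole argument as the contrapositive of the implication (1) $\Rightarrow$ (3) in Theorem \ref{phi0}, which makes the brevity of the proof transparent.
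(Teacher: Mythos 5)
Your proposal is correct and is exactly the intended argument: the paper states this corollary without proof as an ``obvious consequence'' of Theorem \ref{phi0}, namely the contrapositive of the implication $(1)\Rightarrow(3)$. Your additional care in disposing of the identically-zero case (excluded from the hypotheses of Theorem \ref{phi0}) is a sensible touch, and the case is indeed trivial.
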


\section{The General Linear Group $\GLC n$}
\label{section-GLC}

In this section we now turn our attention to the concrete Riemannian matrix Lie groups embedded as subgroups of the complex general linear group.
\medskip

The group of linear automorphisms of $\cn^n$ is the complex general linear group $\GLC n=\{ z\in\cn^{n\times n}\,|\, \det z\neq 0\}$ of invertible $n\times n$ matrices with its standard representation 
$$z\mapsto
\begin{bmatrix}
	z_{11} & \cdots & z_{1n} \\
	\vdots & \ddots & \vdots \\
	z_{n1} & \cdots & z_{nn}
\end{bmatrix}.
$$
Its Lie algebra $\glc n$ of left-invariant vector fields on $\GLC n$ can be identified with $\cn^{n\times n}$ i.e. the complex linear space of $n\times n$ matrices.  We equip $\GLC n$ with its natural left-invariant Riemannian metric $g$ induced by the standard Euclidean inner product $\glc n\times\glc n\to\rn$ on its Lie algebra $\glc n$ satisfying
$$g(Z,W)\mapsto \Re\,\trace\, (Z\cdot \bar W^t).$$ 
For $1\le i,j\le n$, we shall by $E_{ij}$ denote the element of $\rn^{n\times n}$ satisfying
$$(E_{ij})_{kl}=\delta_{ik}\delta_{jl}$$ and by $D_t$ the diagonal
matrices $D_t=E_{tt}.$ For $1\le r<s\le n$, let $X_{rs}$ and
$Y_{rs}$ be the matrices satisfying
$$X_{rs}=\frac 1{\sqrt 2}(E_{rs}+E_{sr}),\ \ Y_{rs}=\frac
1{\sqrt 2}(E_{rs}-E_{sr}).$$
For the real vector space $\glc n$ we then have the canonical orthonormal basis $\B^\cn=\B\cup i\B$, where 
$$\B=\{Y_{rs}, X_{rs}\,|\, 1\le r<s\le n\}\cup\{D_{t}\,|\, t=1,2,\dots,n\}.$$
\vskip .1cm

Let $G$ be a classical Lie subgroup of $\GLC n$ with Lie algebra $\g$ inheriting the induced left-invariant Riemannian metric, which we shall also denote by $g$.  In the cases considered in this paper, $\B_{\g}=\B^\cn\cap\g$ will be an orthornormal basis for the subalgebra $\g$ of $\glc n$.  By employing the Koszul formula for the Levi-Civita connection $\nabla$ on $(G,g)$, we see that for all $Z,W\in\B_{\g}$ we have
\begin{eqnarray*}
	g(\nab ZZ,W)&=&g([W,Z],Z)\\
	&=&\Re\,\trace\, ((WZ-ZW)\bar Z^t)\\
	&=&\Re\,\trace\, (W(Z\bar Z^t-\bar Z^tZ))\\
	&=&0.
\end{eqnarray*}

If $Z\in\g$ is a left-invariant vector field on $G$ and $\phi:U\to\cn$ is a local complex-valued function on $G$ then the $k$-th order derivatives $Z^k(\phi)$ satisfy
\begin{equation*}\label{equation-diff-Z}
	Z^k(\phi)(p)=\frac {d^k}{ds^k}\bigl(\phi(p\cdot\exp(sZ))\bigr)\Big|_{s=0},
\end{equation*}
\smallskip 

\noindent
This implies that the tension field $\tau$ and the conformality operator $\kappa$ on $G$ fulfill 
\begin{equation*}\label{equation-tau}
	\tau(\phi)
	=\sum_{Z\in\B_\g}\bigl(Z^2(\phi)-\nab ZZ(\phi)\bigr)
	=\sum_{Z\in\B_\g}Z^2(\phi),
\end{equation*}	
\begin{equation*}\label{equation-kappa}
	\kappa(\phi,\psi)=\sum_{Z\in\B_\g}Z(\phi)\cdot Z(\psi),
\end{equation*}
where $\B_\g$ is the orthonormal basis $\B^\cn\cap\g$ for the Lie algebra $\g$.

\section{The Special Orthogonal Group $\SO n$}

In this section we introduce the basic tools, related to the special orthogonal group $\SO n$, needed for our later constructions.  The classical Lie group $\SO n$ is given by
$$\SO{n}=\{x\in\GLR{n}\ |\ x\cdot x^t=I_n,\ \det x =1\}.$$
For this we use its standard $n$-dimensional representation on $\cn^n$ with 
$$x\mapsto
\begin{bmatrix}
x_{11} & \cdots & x_{1n} \\ 
\vdots & \ddots & \vdots \\
x_{n1} & \cdots & x_{nn}
\end{bmatrix}.$$ 
The Lie algebra $\so n$ of $\SO n$ is the set of real  skew-symmetric matrices
$$\so n=\{X\in\glr n\ |\ X+X^t=0\}$$ 
and for this we have the canonical orthonormal basis
$$\B_{\so n}=\{Y_{rs}\ |\ 1\le r<s\le n\}.$$
The gradient $\nabla\phi$ of a complex-valued function $\phi:\SO n\to\cn$ is an element of the complexified tangent bundle $T^\cn\SO n$.  This satisfies 
$$\nabla\phi=\sum_{Y\in\B_{\so n}}Y(\phi)\cdot Y.$$
We now consider the functions $x_{j\alpha}: \SO{n} \to \cn$ with $x_{j\alpha} : x \mapsto e_j \cdot x \cdot e_\alpha^t$. For any tangent vector $Y\in B_{\u{n}}$ we have 
$$Y(x_{j\alpha}) : x \mapsto e_j \cdot x \cdot Y  \cdot e_\alpha^t = \sum_{k=1}^n x_{jk}\cdot Y_{k\alpha}.$$ 
This implies that 
\begin{equation}\label{equation-basic-SO(n)}
Y_{rs}(x_{j \alpha}) = \frac{1}{\sqrt{2}}\, (  
x_{jr}\cdot \delta_{\alpha s}
-x_{js}\cdot \delta_{\alpha r}).
\end{equation}

\section{The Unitary Groups $\U n$ and $\SU n$}
\label{section-unitary-group}

The unitary group $\U n$ is the compact subgroup of $\GLC n$ given by 
$$\U n=\{z\in\cn^{n\times n}\,|\,z\cdot\bar z^t=I\}.$$ 
For its standard complex representation $\U n\to\GLC n$ on $\cn^n$ we use the notation 
$$z\mapsto
\begin{bmatrix}
	z_{11} & \cdots & z_{1n} \\
	\vdots & \ddots & \vdots \\
	z_{n1} & \cdots & z_{nn}
\end{bmatrix}.
$$
The Lie algebra $\u n$ of $\U n$ consists of the skew-Hermitian matrices i.e. 
$$\u{n}=\{Z\in\cn^{n\times n}\ |\ Z+\bar Z^t=0\}.$$ 
The left-invariant metric on $\GLC n$ induces the standard biinvariant Riemannian metric $g$ on $\U n$, with
$$g(Z,W)=\Re\trace(Z\cdot\bar W^t).$$ 
The canonical orthonormal basis for the Lie algebra $\u n$ is then given by 
$$\B_{\u n}=\{Y_{rs}, iX_{rs}|\ 1\le r<s\le n\}\cup\{iD_t|\ t=1,\dots ,n\}.$$

The special unitary group $\SU n$ is the subgroup of $\U n$ satisfying 
$$\SU n=\{z\in\U n\ |\ \det(z)=1\}$$
with Lie algebra 
$$\su n=\{Z\in\u n\ |\ \trace Z=0\}.$$
For the Lie algebra $\u n$ of $\U n$ we have the orthogonal decomposition 
$$\u n=\su n\oplus\m,$$
where the one dimensional subspace $\m$ is generated by the unit vector $$E=i\,I_n/\sqrt n.$$

\section{The Quaternionic Unitary Group $\Sp n$}

The group $\Sp n$ is the intersection of the unitary group $\U{2n}$ and the standard representation of the quaternionic general linear group $\GLH n$ in $\cn^{2n\times 2n}$ given by
\begin{equation}\label{equation-Spn}
(z+jw)\mapsto q=
\begin{bmatrix}
z & w \\
 -\bar w & \bar z
\end{bmatrix}.
\end{equation}
The Lie algebra $\sp n$ of $\Sp n$ satisfies

$$\sp{n}=\left\{\begin{bmatrix} Z & W
\\ -\bar W & \bar Z\end{bmatrix}\in\cn^{2n\times 2n}
\ \Big|\ Z^*+Z=0,\ W^t-W=0\right\}.$$ 

We now  introduce the following notation for the elements of the orthonormal basis $\B_{\sp n}$ of the Lie algebra $\sp n$:
$$Y^a_{rs}=\frac 1{\sqrt 2}
\begin{bmatrix}
Y_{rs} & 0 \\
     0 & Y_{rs}
\end{bmatrix}, 
X^a_{rs}=\frac 1{\sqrt 2}
\begin{bmatrix}
iX_{rs} & 0 \\
      0 & -iX_{rs}
\end{bmatrix},$$
$$ X^b_{rs}=\frac 1{\sqrt 2}
\begin{bmatrix}
      0 & iX_{rs} \\
iX_{rs} & 0\end{bmatrix}, 
X^c_{rs}=\frac 1{\sqrt 2}
\begin{bmatrix}
      0 & X_{rs} \\
-X_{rs} & 0
\end{bmatrix},$$
$$D^a_{t}=\frac 1{\sqrt 2}
\begin{bmatrix}
iD_{t} & 0 \\
     0 & -iD_{t}
\end{bmatrix}, 
D^b_{t}=\frac 1{\sqrt 2}
\begin{bmatrix}
     0 & iD_{t}  \\
iD_{t} & 0
\end{bmatrix}, 
D^c_{t}=\frac 1{\sqrt 2}
\begin{bmatrix}
     0 & D_{t}  \\
-D_{t} & 0
\end{bmatrix}.$$
Here $1\le r<s\le n$ and $1\le t\le n$.
\smallskip

\section{The Symmetric Space $\SU n/\SO n$}
\label{section-SUn-SOn}

The purpose of this section is to prove Theorem \ref{theorem-SUSO} and thereby construct a new multi-dimensional family of compact minimal submanifolds of the homogeneous quotient manifold $\SU n/\SO n$, which carries the structure of a compact Riemannian symmetric space. It is well-known that the natural projection $\pi:\SU n\to\SU n/\SO n$ from the special unitary group $\SU n$ is a Riemannian submersion.  This means that we can apply Corollary \ref{corollary-lift-eigen} in this situation.
\smallskip

From the work \cite{Gud-Sif-Sob-2}, of Gudmundsson, Siffert and Sobak, we have the following construction of eigenfunctions.

\begin{proposition}
{\rm \cite{Gud-Sif-Sob-2}}\label{proposition-eigen-SUSO}
For $n\ge 2$, let $a$ be a non-zero element of $\cn^n$ and $A\in\cn^{n\times n}$ with $A=a^t a$. Further let $\phi_a:\SU{n}/\SO{n}\to\cn$ be the complex-valued function induced by its $\SO n$-invariant lift $\hat\phi_a:\SU n\to\cn$ given by
$$\hat\phi_a(z)=\trace(A z z^t).$$
Then $\phi_a$ is an eigenfunction on $\SU{n}/\SO{n}$ with eigenvalues $\lambda$ and $\mu$ satisfying $$\lambda=-\frac{2(n^2+n-2)}{n}\ \ \text{and}\ \  \mu=-\frac{4(n-1)}{n}.$$
\end{proposition}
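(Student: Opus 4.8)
The plan is to reduce everything to a computation on the total space $\SU n$ and then to evaluate two Casimir-type matrix sums. Since the natural projection $\pi:\SU n\to\SU n/\SO n$ is a harmonic Riemannian submersion and $\hat\phi_a=\phi_a\circ\pi$, Corollary \ref{corollary-lift-eigen} tells us that $\phi_a$ is a $(\lambda,\mu)$-eigenfunction on $\SU n/\SO n$ if and only if $\hat\phi_a$ is a $(\lambda,\mu)$-eigenfunction on $\SU n$ with the same $\lambda,\mu$. So I would work entirely with the lift. Writing $w=az\in\cn^{1\times n}$, the cyclicity of the trace gives the convenient form $\hat\phi_a(z)=\trace(a^t a\,zz^t)=ww^t=\sum_{l}w_l^2$. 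For a left-invariant field $Z\in\su n$ the right-translation convention yields $Z(w)=wZ$, whence a short computation gives the two basic identities $Z(\hat\phi_a)=2\,wZw^t$ and $Z^2(\hat\phi_a)=2\,w(Z^2+ZZ^t)w^t$; here one uses repeatedly that $wMw^t=wM^tw^t$ for every matrix $M$, since $wMw^t$ is a scalar.

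Next I would exploit the decomposition $\su n=\so n\oplus i\,\Sym_0(n)$, where $\Sym_0(n)$ denotes the real symmetric traceless matrices, so that the orthonormal basis $\B_{\su n}$ splits into the skew-symmetric part $\{Y_{rs}\}$ and the part $\{iR\}$ with $R$ running through an orthonormal basis of $\Sym_0(n)$ (namely $\{X_{rs}\}$ together with an orthonormal basis of traceless diagonal matrices). For a real skew-symmetric $Z$ one has $wZw^t=0$ and $Z^2+ZZ^t=0$, so the $\so n$-directions contribute nothing; this reflects the $\SO n$-invariance $\hat\phi_a(zO)=\hat\phi_a(z)$. For $Z=iR$ with $R$ symmetric one finds $Z^2+ZZ^t=-2R^2$, hence
$$\tau(\hat\phi_a)=\sum_{Z\in\B_{\su n}}Z^2(\hat\phi_a)=-4\,w\Big(\sum_{R}R^2\Big)w^t.$$
The key lemma to verify is that $\sum_{R\in\B_{\Sym(n)}}R^2=\tfrac{n+1}{2}I_n$, obtained by the index count $X_{rs}^2=\tfrac12(E_{rr}+E_{ss})$ and $D_t^2=E_{tt}$; subtracting the trace direction $(I_n/\sqrt n)^2=\tfrac1n I_n$ leaves $\sum_{R\in\B_{\Sym_0(n)}}R^2=\tfrac{n^2+n-2}{2n}I_n$. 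This gives $\tau(\hat\phi_a)=-\tfrac{2(n^2+n-2)}{n}\,ww^t$, i.e. the claimed $\lambda$.

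For the conformality operator the same split gives $\kappa(\hat\phi_a,\hat\phi_a)=\sum_Z(2wZw^t)^2=-4\sum_{R}(wRw^t)^2$, again with $R$ over an orthonormal basis of $\Sym_0(n)$. Here I would introduce the complex symmetric matrix $P=w^tw$, so that $wRw^t=\trace(RP)$, and use the reproducing identity $\sum_{R\in\B_{\Sym(n)}}\trace(RP)^2=\trace(P^2)$, valid for any symmetric $P$. Subtracting the trace direction yields $\sum_{R\in\B_{\Sym_0(n)}}\trace(RP)^2=\trace(P^2)-\tfrac1n(\trace P)^2$. Finally $\trace P=ww^t=\hat\phi_a$ and, since $P^2=\hat\phi_a\,P$, also $\trace(P^2)=\hat\phi_a^2$; hence $\kappa(\hat\phi_a,\hat\phi_a)=-4\cdot\tfrac{n-1}{n}\,\hat\phi_a^2$, giving the claimed $\mu$.

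The routine differentiations and the split of $\B_{\su n}$ are straightforward; the genuine content, and the step I expect to be the main obstacle, is the exact evaluation of the two traceless sums $\sum_R R^2$ and $\sum_R\trace(RP)^2$. The efficient route is to compute both sums over all of $\Sym(n)$ and then subtract the single trace direction $I_n/\sqrt n$, which avoids committing to an explicit orthonormal basis of the traceless diagonal matrices; keeping careful track of this $\su n$-versus-$\u n$ correction is exactly where the factors $n^2+n-2$ and $n-1$ are produced.
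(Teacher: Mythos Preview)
Your argument is correct and complete. Note, however, that the paper itself does not prove this proposition: it is quoted from \cite{Gud-Sif-Sob-2} and stated without proof, so there is no ``paper's own proof'' to compare against. What you have supplied is an efficient self-contained verification. The reduction via Corollary~\ref{corollary-lift-eigen} to a computation on $\SU n$, the observation that the $\so n$-directions are annihilated by $\SO n$-invariance, and the two Casimir identities $\sum_{R\in\B_{\Sym(n)}}R^2=\tfrac{n+1}{2}I_n$ and $\sum_{R\in\B_{\Sym(n)}}\trace(RP)^2=\trace(P^2)$ are all sound; the $\su n$-versus-$\u n$ correction by subtracting $(I_n/\sqrt n)^2$ is exactly what produces the factors $n^2+n-2$ and $n-1$, as you anticipate. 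The one place worth a line of justification is the identity $\sum_R\trace(RP)^2=\trace(P^2)$ for \emph{complex} symmetric $P$: it follows by complex-bilinear extension of the real completeness relation on $\Sym(n)$, but a reader might pause there.
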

\smallskip

\begin{proof}(Theorem \ref{theorem-SUSO})
For the Lie algebra $\su n$ of $\SU n$ we have the orthogonal decomposition $\su{n}=\so{n}\oplus i\mathfrak{p}$ where the subspace  $$\p=\{X\in\rn^{n\times n}\ | 
\  X=X^t \ \text{and}\ \trace X=0\}$$ 
is generated by the elements of 
$$\{X_{rs},\ D_{rs} \ | \ 1\leq r<s\leq n\}.$$
This means that if a point $z\in\SU n$ is a critical point for $\hat\phi:\SU n\to\cn$ then, for all $1\leq r<s\leq n$, we have
\begin{eqnarray*}
X_{rs}(\hat{\phi}_a)
&=&\trace(X_{rs}^t\cdot z^t\cdot A\cdot z+z^t\cdot A\cdot z\cdot X_{rs})\\
&=&\trace((X_{rs}^t+X_{rs})\cdot z^t\cdot a^t\cdot a\cdot z)\\
&=&2\,\trace(a\cdot z\cdot X_{rs}\cdot z^t\cdot a^t)\\
&=&2\sqrt 2\,(\sum_{k=1}^n a_k z_{kr})(\sum_{k=1}^n a_k z_{ks})\\
&=&0.
\end{eqnarray*}
Hence, for any choice $1\leq r<s\leq n$, either  \begin{equation}\label{Xrs-SUSO}
\sum_{k=1}^n a_k z_{kr}=0\ \ \text{or}\ \ \sum_{k=1}^n a_k z_{ks}=0.
\end{equation}

Now there exists an integer $1\le t\le n$ such that 
\begin{equation}\label{equation-SUSO-t}
\sum_{k=1}^n a_k z_{kt}\neq 0,
\end{equation}
otherwise the rows of the matrix $z\in\SU n$ would be linearly dependent, which is impossible.  It then follows from statement (\ref{Xrs-SUSO}) that if $m\neq t$, then 
$$\sum_{k=1}^n a_k z_{km}=0.$$

Similarly, we see that at a critial point $z\in\SU n$ we have 
$D_{rs}(\phi)=0$ for all $1\leq r<s\leq n$. This is equivalent to 
$$(\sum_{k=1}^n a_k z_{kr}+\sum_{k=1}^n a_k z_{ks})\cdot (\sum_{k=1}^n a_k z_{kr}-\sum_{k=1}^n a_k z_{ks})=0.$$ 
Applying our previous findings, we see that
$$\sum_{k=1}^n a_k z_{kt}=0,$$ which contradicts equation 
(\ref{equation-SUSO-t}).
We conclude that the eigenfunction $\phi_a:\SU n/\SO n\to\cn$ has no critical points. According to  Corollary \ref{corollary-phi0}, the origin $0\in\cn$ lies in the image of $\phi_a.$  It then immediately follows from Theorem \ref{theorem-Gud-Mun-1} that the fibre $\phi_a^{-1}(\{0\})$ is minimal.
\end{proof}

\section{The Symmetric Space $\Sp{n}/\U n$}
\label{section-Spn-Un}

In this section we prove Theorem \ref{theorem-SpU}, which yields a new multi-dimensional family of compact minimal submanifolds of the Riemannian symmetric space $\Sp{n}/\U n$. We first recall that the natural projection $\pi:\Sp n\to\Sp n/\U n$ from the quaternionic unitary group $\Sp n$ is a Riemannian submersion and thus we may now employ Corollary \ref{corollary-lift-eigen}.
\smallskip 

Here we use the standard representation of the quaternionic unitary group $\Sp n$ on $\cn^{2n}$ given by equation (\ref{equation-Spn}). Then we identify the unitary group with following subgroup of $\Sp n$
$$\U n\cong\Big\{\begin{bmatrix}
 x & y \\
-y & x
\end{bmatrix}
\in\rn^{2n\times 2n}\ \Big|\ xx^t+yy^t=I_n,\ \ xy^t=yx^t\Big\}.$$

For the Lie algebra $\sp n$ of $\Sp n$ we have the orthogonal decomposition $\sp{n}=\u{n}\oplus \p$ where the subspace $\p$ is generated by the elements of its following orthonormal basis
\begin{eqnarray*}
\mathcal{B}_\p&=&\Bigl\{ 
X_{rs}^{a}=\frac{i}{2}
\begin{bmatrix}
    X_{rs}&0\\
    0&-X_{rs}
\end{bmatrix}, 
D_{t}^{a}=\frac{i}{\sqrt{2}}
\begin{bmatrix}
    D_t&0\\
    0&-D_t
\end{bmatrix},\\
&&\qquad X_{rs}^{b}=\frac{i}{2}
\begin{bmatrix}
    0&X_{rs}\\
    X_{rs}&0
\end{bmatrix}, 
D_{t}^{b}=\frac{i}{\sqrt{2}}
\begin{bmatrix}
    0&D_t\\
    D_t&0
\end{bmatrix}\\
&&\qquad\qquad\qquad\qquad\qquad\Big| \ 1\leq r<s\leq n, \ 1\leq t\leq n\Bigr\}. 
\end{eqnarray*}

Let $\sigma:\Sp{n}\to\Sp{n}$ be the map given by $\sigma(q)=q\,q^t$.  Then it is easily seen that $\sigma$ is $\U{n}$-invariant i.e. for all $q\in\Sp{n}$ and $z\in\U{n}$ we have 
$$\sigma(qz)=qzz^tq^t=qq^t=\sigma(q).$$

The following result on eigenfunctions on the quotient space $\Sp{n}/\U{n}$ can be found in \cite{Gud-Sif-Sob-2}.

\begin{proposition}
{\rm \cite{Gud-Sif-Sob-2}}
\label{proposition-eigen-SpU}
Let $a$ be a non-zero element of $\cn^{2n}$ and $A\in\cn^{2n\times 2n}$ with $A=a^t a$. Further let $\phi_a:\Sp{n}/\U{n}\to\cn$ be the complex-valued function induced by its $\U n$-invariant lift $\hat\phi_a:\Sp n\to\cn$ given by
$$\hat\phi_a(q)=\trace(A q q^t).$$
Then $\phi_a$ is an eigenfunction on $\Sp{n}/\U{n}$ with eigenvalues
$$\lambda=-2(n+1)\ \ \text{and}\ \ \mu=-2.$$
\end{proposition}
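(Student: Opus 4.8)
The plan is to work on the group $\Sp{n}$ itself and transfer the conclusion to the quotient via Corollary~\ref{corollary-lift-eigen}. Since $\sigma(q)=qq^t$ satisfies $\sigma(qz)=\sigma(q)$ for $z\in\U{n}$, as shown just above the statement, the lift $\hat\phi_a(q)=\trace(A\sigma(q))$ is $\U{n}$-invariant and hence factors as $\hat\phi_a=\phi_a\circ\pi$; by Corollary~\ref{corollary-lift-eigen} it then suffices to prove that $\hat\phi_a$ is a $(\lambda,\mu)$-eigenfunction on $\Sp{n}$ with $\lambda=-2(n+1)$ and $\mu=-2$. The first move is to rewrite the lift as a sum of squares of linear functions: with $A=a^ta$ and $w=aq\in\cn^{1\times 2n}$, cyclicity of the trace gives $\hat\phi_a(q)=\trace(a^t a\,qq^t)=(aq)(aq)^t=\sum_{\ell=1}^{2n}w_\ell^2$, where $w_\ell\colon\Sp{n}\to\cn$ is the linear coordinate function $w_\ell(q)=(aq)_\ell=\sum_k a_k q_{k\ell}$.

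The main idea is that the family $\{w_\ell\}$ is governed by the known eigenfamily on $\Sp{n}$ and that $\hat\phi_a$ inherits its eigendata through the fundamental relation $\tau(\phi\psi)=\tau(\phi)\psi+2\kappa(\phi,\psi)+\phi\tau(\psi)$. Using the group formulas of Section~\ref{section-GLC} together with the fact that $Z(w_\ell)=(wZ)_\ell$ and $Z^2(w_\ell)=(wZ^2)_\ell$ for $Z\in\B_{\sp n}$, I would first establish the two linear identities
\[
\tau(w_\ell)=\lambda_0\,w_\ell,\qquad \kappa(w_\ell,w_m)=\mu_0\,w_\ell w_m,
\]
with $\lambda_0=-\tfrac{2n+1}{2}$ and $\mu_0=-\tfrac12$, i.e.\ exactly the eigenvalues recorded for $\Sp{n}$ in Table~\ref{table-eigenfamilies} and in \cite{Gud-Sak-1}.

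Granting these, the computation assembles cleanly. From $\tau(w_\ell^2)=2w_\ell\tau(w_\ell)+2\kappa(w_\ell,w_\ell)=2(\lambda_0+\mu_0)w_\ell^2$ one obtains
\[
\tau(\hat\phi_a)=\sum_{\ell}\tau(w_\ell^2)=2(\lambda_0+\mu_0)\sum_\ell w_\ell^2=2(\lambda_0+\mu_0)\,\hat\phi_a=-2(n+1)\,\hat\phi_a,
\]
while the derivation property $Z(w_\ell^2)=2w_\ell Z(w_\ell)$ gives $\kappa(w_\ell^2,w_m^2)=4w_\ell w_m\,\kappa(w_\ell,w_m)=4\mu_0\,w_\ell^2 w_m^2$, whence
\[
\kappa(\hat\phi_a,\hat\phi_a)=\sum_{\ell,m}\kappa(w_\ell^2,w_m^2)=4\mu_0\sum_{\ell,m}w_\ell^2 w_m^2=4\mu_0\,\hat\phi_a^2=-2\,\hat\phi_a^2.
\]
Thus $\lambda=2(\lambda_0+\mu_0)=-2(n+1)$ and $\mu=4\mu_0=-2$, and the assertion on $\Sp{n}/\U{n}$ follows from Corollary~\ref{corollary-lift-eigen}.

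The main obstacle is the verification of the two linear identities, and in particular of the conformality relation. Writing $E_{\ell m}$ for the elementary matrix, the formulas of Section~\ref{section-GLC} give $\tau(w_\ell)=\bigl(w\sum_{Z}Z^2\bigr)_\ell$ and $\kappa(w_\ell,w_m)=w\bigl(\sum_{Z}ZE_{\ell m}Z^t\bigr)w^t$, the sums running over $\B_{\sp n}$. The tension part is comparatively routine: $\sum_Z Z^2$ is the Casimir element in the standard representation of $\Sp{n}$ on $\cn^{2n}$, which is irreducible, so by Schur's lemma it equals a scalar $\lambda_0 I_{2n}$, the value $\lambda_0=-\tfrac{2n+1}{2}$ being fixed by the normalisation of $\B_{\sp n}$. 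The conformality part requires evaluating the completeness tensor $\sum_{Z}ZE_{\ell m}Z^t$ over the explicit block basis $Y^a_{rs},X^a_{rs},X^b_{rs},X^c_{rs},D^a_t,D^b_t,D^c_t$ introduced for $\sp n$ earlier, keeping careful track of the factors $\tfrac1{\sqrt2}$ and of the skew-Hermitian relation $Z^t=-\bar Z$. Here I expect to use the elementary but decisive fact that $w\,S\,w^t=0$ for every skew-symmetric $S$, so that only the symmetric part of $\sum_{Z}ZE_{\ell m}Z^t$ contributes; the claim then reduces to showing that this symmetric part equals $\mu_0$ times the symmetrisation of $E_{\ell m}$, after which $\kappa(w_\ell,w_m)=\mu_0 w_\ell w_m$ is immediate. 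Matching the resulting constants against Table~\ref{table-eigenfamilies} provides a reassuring consistency check on the whole computation.
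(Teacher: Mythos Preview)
The paper does not supply its own proof of this proposition; it is quoted verbatim from \cite{Gud-Sif-Sob-2} with no argument given here, so there is nothing in the present paper to compare your attempt against directly.

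That said, your strategy is correct and is essentially how the result is obtained in \cite{Gud-Sif-Sob-2}: one lifts to $\Sp{n}$ via Corollary~\ref{corollary-lift-eigen}, writes $\hat\phi_a(q)=\sum_{\ell=1}^{2n} w_\ell^2$ with $w_\ell=(aq)_\ell$, and reduces everything to the tension and conformality relations for the linear coordinate functions. Your algebra $\lambda=2(\lambda_0+\mu_0)=-2(n+1)$ and $\mu=4\mu_0=-2$ is exactly right. The step you defer---showing $\tau(w_\ell)=-\tfrac{2n+1}{2}\,w_\ell$ and $\kappa(w_\ell,w_m)=-\tfrac12\,w_\ell w_m$ on $\Sp{n}$---is precisely the content of the $\Sp{n}$ entry in Table~\ref{table-eigenfamilies}, established in \cite{Gud-Sak-1}; there no isotropy condition on $a$ is needed (unlike the $\SO{n}$ case), so the $w_\ell$ genuinely form a $(-\tfrac{2n+1}{2},-\tfrac12)$-eigenfamily for any nonzero $a\in\cn^{2n}$. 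Your plan for the direct verification---computing the symmetric part of $\sum_{Z\in\B_{\sp n}}ZE_{\ell m}Z^t$ and using $wSw^t=0$ for skew-symmetric $S$---is the right organisation of that calculation and will go through.
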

\smallskip

\begin{proof}(Theorem \ref{theorem-SpU})
Let us assume that $q\in\Sp n$ is a critical point for the function $\hat\phi_a$ i.e. that the gradient $$\nabla\hat\phi_a(q)=\sum_{X\in\B_\p}X(\hat\phi_a(q))\cdot X=0.$$

By setting $D_t^{a}(\phi_a)=0$ and $D_t^{b}(\phi_a)=0$, for all $1\leq t\leq n$, we obtain the following relations
\begin{equation}\label{equation-SpU-1}
\left(\sum_{k=1}^{n} a_k q_{kt}\right)\cdot\left(\sum_{k=1}^{n} a_k q_{k,n+t}\right)=0
\end{equation}
and
\begin{equation}\label{equation-SpU-2}
\left(\sum_{k=1}^{n} a_k q_{kt}\right)^2-\left(\sum_{k=1}^{n} a_k q_{k,n+t}\right)^2=0,
\end{equation}
Equation (\ref{equation-SpU-1}) implies that for all $1\leq t\leq n$ either 
$$\sum_{k=1}^{n} a_k q_{kt}=0\ \ \text{or}\ \ \sum_{k=1}^{n} a_k q_{k,n+t}=0.$$ 
By plugging this into the equation (\ref{equation-SpU-1}) we then yield
$$\sum_{k=1}^{n} a_k q_{kt}=0 \ \ \text{and}\ \ \sum_{k=1}^{n} a_k q_{k,n+t}=0,$$
for all $1\leq t\leq n$.  But this is clearly impossible.  
\smallskip 

This shows that the eigenfunction $\hat\phi_a$ has no critical points and hence is a submersion. It follows from Corollary \ref{corollary-phi0} that the origin $0\in\cn $ lies in the image of $\phi_a$.  Theorem \ref{theorem-Gud-Mun-1} implies that the fibre $\phi_a^{-1}(\{0\})$ is a minimal submanifold of $\Sp n/\U n$ of codimension two.
\end{proof}

\section{The Symmetric Space $\SO{2n}/\U n$}
\label{section-SO2n-Un}

The goal of this section is to prove Theorem \ref{theorem-SOU}, which immediately provides us with a new multi-dimensional family of compact minimal submanifolds of the Riemannian symmetric space $\SO{2n}/\U n$. As the natural projection $\pi:\SO{2n}\to\SO{2n}/\U n$ from the special orthogonal group $\SO{2n}$ is a Riemannian submersion, we can now apply Corollary \ref{corollary-lift-eigen}.
\smallskip 

For the unitary group $\U n$ we have a well-known natural group monomorphism  
$$\psi:\U n\to\SO{2n},\ \ \text{with}\ \ \psi:z=x+iy\mapsto \begin{pmatrix}x&-y\\y&x\end{pmatrix},$$
embedding it into the special orthogonal group $\SO{2n}$. Here $x,y\in\rn^{n\times n}$ are the real and imaginary parts of $z\in\U n$.  In this section we identify $\U n$ with the image $\psi (\U n)$ which clearly is a subgroup of $\SO{2n}$. 

For the Lie algebra $\so{2n}$ we have the orthogonal decomposition $$\so{2n}=\u{n}\oplus\p,$$ where $\u{n}$ is the Lie algebra of $\U{n}$ as a subgroup of $\SO{2n}$.  A natural orthonormal basis for the vector space $\p$ is given by  
$$\mathcal{B}_\p=\left
\{Y_{rs}^{a}
=\frac{1}{\sqrt{2}}
\begin{pmatrix}
Y_{rs}&0\\
0&-Y_{rs}
\end{pmatrix}, 
Y_{rs}^{b}
=\frac{1}{\sqrt{2}}
\begin{pmatrix}
0&Y_{rs}\\
Y_{rs}&0
\end{pmatrix}
\ \Big| \ 1\leq r<s\leq n\right\}.$$

Let $I_n$ denote the $n \times n$ identity matrix and $J_n$ be the standard complex structure on $\rn^{2n}$ given by
\begin{equation*}
J_n = \begin{bmatrix}
0 & I_n\\
-I_n & 0
\end{bmatrix}.
\end{equation*}
Then consider the map $\sigma :\SO{2n}\to\SO{2n}$ satisfying 
\begin{equation*}
\sigma(x) = x\cdot J_n \cdot x^t.
\end{equation*}
A simple calculation shows that this map is $\U n$-invariant i.e. for any $y\in\U{n}$ and $x\in\SO{2n}$ we have $$\sigma (xy)=xyJ_ny^tx^t=xJ_nx^t=\sigma(x).$$
This shows that for any skew-symmetric matrix $A\in\cn^{2n\times 2n}$ the function $\hat\phi_A :\SO{2n}\to\cn$, given by
\begin{equation*}
\hat \phi_A(x) = \trace(A\sigma(x)) = \trace(x^tAxJ_n),
\end{equation*}
is $\U n$-invariant and hence induces a map $\phi_A:\SO{2n}/\U n\to\cn$ on the the quotient space. Note that in this case the skew-symmetry condition can be assumed without loss of generality, since $\sigma(x)$ is skew-symmetric.
\smallskip

We now have the following slightly improved version of Proposition 5.1. of \cite{Gud-Sif-Sob-2}.

\begin{proposition}\label{proposition-SOU}
For $n\ge 2$, let $a,b$ be two linearly independent elements of $\cn^{2n}$ satisfying the condition 
$$(a,a)(b,b)-(a,b)^2=0.$$ 
Let $A\in\cn^{2n\times2n}$ be the skew-symmetric matrix $$A=\sum_{j,\alpha=1}^{2n} a_jb_\alpha Y_{j\alpha}.$$ 
Further let $\phi_{a,b}:\SO{2n}/\U{n}\to\cn$ be the complex-valued function induced by its $\U n$-invariant lift $\hat\phi_{a,b}:\SO{2n}\to\cn$ given by
$$\hat\phi_{a,b}(x)=\trace(A x J_n x^t).$$
Then $\phi_{a,b}$ is an eigenfunction on $\SO{2n}/\U{n}$ with the eigenvalues
$$\lambda=-2(n-1)\ \ \text{and}\ \ \mu=-1.$$
\end{proposition}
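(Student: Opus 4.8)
The plan is to verify the two eigenvalue identities for the $\U n$-invariant lift $\hat\phi_{a,b}$ directly on the special orthogonal group $\SO{2n}$, and then to transfer them to the quotient $\SO{2n}/\U n$ by invoking Corollary \ref{corollary-lift-eigen}, using that the natural projection $\pi:\SO{2n}\to\SO{2n}/\U n$ is a harmonic Riemannian submersion. On $\SO{2n}$ I shall use $\tau(\phi)=\sum_{Y\in\B_{\so{2n}}}Y^2(\phi)$ and $\kappa(\phi,\psi)=\sum_{Y\in\B_{\so{2n}}}Y(\phi)Y(\psi)$ over the orthonormal basis $\B_{\so{2n}}=\{Y_{rs}\mid 1\le r<s\le 2n\}$, together with the derivative rule (\ref{equation-basic-SO(n)}) and the relation $xx^t=I_{2n}$.

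The first step is to put the lift in a workable form. Writing $A=\tfrac1{\sqrt2}(a^tb-b^ta)$ and using that $xJ_nx^t$ is skew-symmetric, one gets $\hat\phi_{a,b}(x)=\trace(AxJ_nx^t)=\sqrt2\,(bx)\,J_n\,(ax)^t$. Hence, introducing on $\SO{2n}$ the linear functions $\alpha_r=(ax)_r$ and $\beta_r=(bx)_r$, we obtain $\hat\phi_{a,b}=\sqrt2\sum_{r,s}\beta_r\,(J_n)_{rs}\,\alpha_s$. I would then record the first-order data of these building blocks. From (\ref{equation-basic-SO(n)}) one has $Y_{uv}(\alpha_r)=\tfrac1{\sqrt2}(\alpha_u\delta_{rv}-\alpha_v\delta_{ru})$, and likewise for $\beta$, which yields $\tau(\alpha_r)=-\tfrac{2n-1}2\,\alpha_r$, $\tau(\beta_r)=-\tfrac{2n-1}2\,\beta_r$, and, using the orthonormality of the rows of $x$,
\begin{align*}
\kappa(\alpha_r,\alpha_s)&=\tfrac12\bigl(\delta_{rs}(a,a)-\alpha_r\alpha_s\bigr),\qquad
\kappa(\beta_r,\beta_s)=\tfrac12\bigl(\delta_{rs}(b,b)-\beta_r\beta_s\bigr),\\
\kappa(\alpha_r,\beta_s)&=\tfrac12\bigl(\delta_{rs}(a,b)-\alpha_s\beta_r\bigr),
\end{align*}
together with $\sum_r\alpha_r^2=(a,a)$, $\sum_r\beta_r^2=(b,b)$ and $\sum_r\alpha_r\beta_r=(a,b)$.

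Next I would assemble $\tau(\hat\phi_{a,b})$ and $\kappa(\hat\phi_{a,b},\hat\phi_{a,b})$ from this data by means of $\tau(fg)=\tau(f)g+2\kappa(f,g)+f\tau(g)$ and the fact that $\kappa$ is a derivation in each argument. For the tension field the contributions carrying $\delta_{rs}(a,b)$ disappear since $\trace J_n=0$, while $\sum_{r,s}(J_n)_{rs}\alpha_r\beta_s=-\tfrac1{\sqrt2}\hat\phi_{a,b}$ by the skew-symmetry of $J_n$; collecting everything gives $\tau(\hat\phi_{a,b})=-2(n-1)\hat\phi_{a,b}$. For the conformality operator, expanding $\kappa(\beta_r\alpha_s,\beta_p\alpha_q)$ and contracting against $(J_n)_{rs}(J_n)_{pq}$, the identities $J_n^2=-I_{2n}$ (whence also $J_nJ_n^t=I_{2n}$) and the vanishing of $\sum_{r,s}(J_n)_{rs}c_{rs}$ for any $c$ symmetric in $r,s$ reduce the whole sum to
$$\kappa(\hat\phi_{a,b},\hat\phi_{a,b})=2\bigl((a,a)(b,b)-(a,b)^2\bigr)-\hat\phi_{a,b}^2.$$
Now the hypothesis $(a,a)(b,b)-(a,b)^2=0$ is exactly what annihilates the inhomogeneous term, leaving $\kappa(\hat\phi_{a,b},\hat\phi_{a,b})=-\hat\phi_{a,b}^2$. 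Thus $\hat\phi_{a,b}$ is a $(-2(n-1),-1)$-eigenfunction on $\SO{2n}$, and Corollary \ref{corollary-lift-eigen} gives the assertion for $\phi_{a,b}$ on $\SO{2n}/\U n$.

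The step I expect to be the main obstacle is the conformality computation: $\kappa(\beta_r\alpha_s,\beta_p\alpha_q)$ expands into eight quartic monomials in the $\alpha$'s and $\beta$'s, and one must keep careful track of which of them, after summation against $(J_n)_{rs}(J_n)_{pq}$, collapse to a multiple of $\hat\phi_{a,b}^2$ via $\sum_s(J_n)_{rs}(J_n)_{ps}=\delta_{rp}$, which collapse to one of the scalars $(a,a)(b,b)$ or $(a,b)^2$ via $\sum_s(J_n)_{rs}(J_n)_{sq}=-\delta_{rq}$, and which vanish outright because $J_n$ is antisymmetric while the accompanying factor is symmetric. The computation is elementary, but the bookkeeping has to be carried through with care so that the coefficients of $\hat\phi_{a,b}^2$ and of $(a,a)(b,b)-(a,b)^2$ emerge exactly as stated.
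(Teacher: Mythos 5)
Your proposal is correct, and it actually supplies the computation that the paper does not: the paper's proof of Proposition \ref{proposition-SOU} is a one-line deferral to Proposition 5.1 of \cite{Gud-Sif-Sob-2}, whereas you carry out the argument explicitly. I verified the key steps: $A=\tfrac1{\sqrt2}(a^tb-b^ta)$ gives $\hat\phi_{a,b}=\sqrt2\,(bx)J_n(ax)^t$; the first-order data for $\alpha_r=(ax)_r$, $\beta_r=(bx)_r$ are as you state; the $\delta_{rs}(a,b)$ contribution to the tension field dies against $\trace J_n=0$ and the cross term contributes $+\hat\phi_{a,b}$, giving $\lambda=-(2n-1)+1=-2(n-1)$; and the eight quartic terms in the conformality computation reduce to $2\bigl((a,a)(b,b)-(a,b)^2\bigr)-\hat\phi_{a,b}^2$ exactly as claimed. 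A pleasant by-product of your route is that it makes the paper's ``slightly improved'' hypothesis transparent: the computation shows that the only obstruction to $\mu=-1$ is the single scalar $(a,a)(b,b)-(a,b)^2$, so the weakened condition in the statement is precisely what is needed and nothing more. One small imprecision in your closing paragraph: the terms that collapse to $\hat\phi_{a,b}^2$ do so because they factor as products of two full contractions $\sum_{r,s}(J_n)_{rs}\beta_r\alpha_s$, not via $\sum_s(J_n)_{rs}(J_n)_{ps}=\delta_{rp}$; that identity is what produces the $(a,a)(b,b)$ terms, while $J_n^2=-I_{2n}$ produces the $-(a,b)^2$ terms. This does not affect the correctness of the result you state.
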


\begin{proof}
The arguments needed here are exactly the same as those employed for  Proposition 5.1. in \cite{Gud-Sif-Sob-2}.
\end{proof}
\smallskip

\begin{proof}(Theorem \ref{theorem-SOU})
From the definition $$\hat\phi_{a,b}(x)=\trace(AxJ_nx^t)$$ of $\hat\phi_{a,b}$ it is easily seen that 
\begin{equation*}
\hat\phi_{a,b}(x)
=\frac{2}{\sqrt{2}}\cdot\sum_{t=1}^n\big((a,x_{n+t})(b,x_t)-(a,x_t)(b,x_{n+t})\big).
\end{equation*}
Since $x\in\SO{2n}$, we know that its columns $x_1,x_2,\dots,x_{2n}$ form an orthonormal basis for the vector space $\rn^{2n}$.  This gives the equation
\begin{equation*}\label{equation-ortho}
(a,b)=\sum_{k=1}^{2n}(a,x_k)(b,x_k)=0.
\end{equation*}
Corollary \ref{corollary-phi0} implies that there exists an element $x\in\SO{2n}$ satisfying $\hat\phi_{a,b}(x)=0.$
Let us now assume that the element $x\in\SO{2n}$ is in the fibre $\hat\phi_{a,b}(\{0\})$ i.e.
\begin{equation*}
\sum_{t=1}^n\big( (a,x_{n+t})(b,x_t)-(a,x_t)(b,x_{n+t})\big)=0.
\end{equation*}
The orthogonal complement $\p$ of the subspace $\u n$ in the Lie algebra $\so{2n}$ is generated by the elements
$$Y_{rs}^{a}=\begin{pmatrix}
    Y_{rs} & 0\\
    0 & -Y_{rs}
\end{pmatrix},
Y_{rs}^{b}=\begin{pmatrix}
    0&Y_{rs}\\
    Y_{rs}&0
\end{pmatrix}\in\mathcal{B}_{\p},$$
where $1\leq r<s\leq n$. 
A simple computation shows that for $Y_{rs}^{a},Y_{rs}^{b}\in\B_\p$ the conditions 
$$\Re\, Y_{rs}^{a}(\hat\phi_{a,b})=0\ \ \text{and}\ \  \Re\, Y_{rs}^{b}(\hat\phi_{a,b})=0$$ 
are equivalent to
\begin{equation*}
0=(u,x_r)(b,x_{n+s})+(u,x_{n+r})(b,x_s)-(u,x_s)(b,x_{n+r})-(u,x_{n+s})(b,x_r),
\end{equation*}
\begin{equation*}
0=(u,x_r)(b,x_{s})+(u,x_{n+s})(b,x_{n+r})-(u,x_s)(b,x_{r})-(u,x_{n+r})(b,x_{n+s}),
\end{equation*}
respectively. The non-vanishing vector $a=u+i\,v\in\cn^{2n}$ is isotropic i.e. $(a,a)=0$, hence $|u|=|v|\neq 0$.  This implies that there exists an $r\in\{1,2,\dots,2n\}$ such that $(u,x_r)\neq 0$.  Without loss of generality we can assume that $(u,x_1)\neq 0$. Then with $s=2,3,\dots n$ we yield the following system of $2n=2\,(n-1)+2$ equations
\begin{eqnarray*}
	0&=&(u,x_r)(b,x_{n+s})+(u,x_{n+r})(b,x_s)-(u,x_s)(b,x_{n+r})-(u,x_{n+s})(b,x_r),\\
	0&=&(u,x_r)(b,x_{s})+(u,x_{n+s})(b,x_{n+r})-(u,x_s)(b,x_{r})-(u,x_{n+r})(b,x_{n+s}),\\
	0&=&\Re\, (a,b)=\sum_{k=1}^{2n}(u,x_k)\,(b,x_k),\\
	0&=&\Re\, \hat\phi_{a,b}(x)=-\sum_{k=1}^{n}
	\big( (u,x_{n+k})\,(b,x_k)-(u,x_{k})\,(b,x_{n+k})\big).
\end{eqnarray*}
This can be written in the form
$$
M_{n}\cdot 
\begin{bmatrix}
(b,x_1)\\(b,x_2)\\ \vdots \\ (b,x_{2n})
\end{bmatrix}
=
\begin{bmatrix}
0\\0\\ \vdots \\ 0
\end{bmatrix}
.
$$
By reordering the columns of the matrix $M_{n}\in\rn^{2n\times2n}$ it can be brought on the form 
$$M_n \mapsto \tilde M_n = (u,x_1)\cdot I_{2n} + S_n,$$ 
where $I_{2n}$ is the identity matrix and $S_n\in\rn^{2n\times 2n}$ is skew-symmetric, see Appendix \ref{appendix-A}.  This means that the matrix $M_n$ is invertible.  This implies that the non-zero vector $b\in\rn^{2n}$ vanishes.  This contradicts the fact that an element $x\in\hat\phi_{a,b}^{-1}(\{ 0\})$ is a critical point of the $\U n$-invariant function $\hat\phi_{a,b}:\SO{2n}\to\cn$.  It then follows that the compact fibre $\phi_{a,b}^{-1}(\{ 0 \})$ is regular.  According to Theorem \ref{theorem-Gud-Mun-1} it is minimal as well.
\end{proof}

\section{The Symmetric Space $\SU{2n}/\Sp n$}
\label{section-SU2n-Spn}

In this last section we provide the proof of Theorem \ref{theorem-SUSp}. Consequently, we obtain a new multi-dimensional family of compact minimal submanifolds of the Riemannian symmetric space $\SU{2n}/\Sp n$. Similar to the previous sections, note that the natural projection $\pi:\SU{2n}\to\SU{2n}/\Sp n$ from the special unitary group $\SU{2n}$ is a Riemannian submersion and thus Corollary \ref{corollary-lift-eigen} applies.
\smallskip

For the quaternionic unitary group $\Sp n$ we have the well-known group monomorphism  
$$\psi:\Sp n\to\SU{2n},\ \ \text{with}\ \ \psi:q=z+jw\mapsto \begin{pmatrix}z&-\bar w\\w&\bar z\end{pmatrix},$$
embedding $\Sp n$ into the special orthogonal group $\SU{2n}$. Here $z,w\in\cn^{n\times n}$.  In this section we identify $\Sp n$ with the image $\psi (\Sp n)$ which clearly is a subgroup of $\SU{2n}$. 

For the Lie algebra $\su{2n}$ we have the orthogonal decomposition $$\su{2n}=\sp{n}\oplus\p,$$ where $\sp{n}$ is the Lie algebra of $\Sp{n}$ as a subgroup of $\SU{2n}$. Then the vector space $\p$ is generated by the elements of the set $$\S=\left\{Y_{rs}^{a}=\frac{1}{2}\begin{pmatrix}
    Y_{rs}&0\\
    0&-Y_{rs}
\end{pmatrix},X_{rs}^{a}=\frac{1}{2}\begin{pmatrix}
    iX_{rs}&0\\
    0&iX_{rs}
\end{pmatrix},Y_{rs}^{b}=\frac{1}{2}\begin{pmatrix}
    0&Y_{rs}\\
    Y_{rs}&0
\end{pmatrix},\right.$$ $$\left.Y_{rs}^{c}=\frac{1}{2}\begin{pmatrix}
    0&iY_{rs}\\
    -iY_{rs}&0
\end{pmatrix} \  \Big| \  1\leq r<s\leq n\right\}$$
$$\bigcup \ \left\{D_{rs}^{a}=\frac{1}{2}\begin{pmatrix}
    iD_{rs}&0\\
    0&iD_{rs}
\end{pmatrix} \ \Big| \ 1\leq r<s\leq n\right\},$$ where $D_{rs}=\frac{1}{\sqrt{2}}\cdot(E_{rr}-E_{ss}).$
\smallskip

Let us now consider the map $\sigma :\SU{2n}\to\SU{2n}$ satisfying 
\begin{equation*}
\sigma(x) = z\, J_n \, z^t.
\end{equation*}
A simple calculation shows that this map is $\Sp n$-invariant i.e. for any $q\in\Sp{n}$ and $z\in\SU{2n}$ we have $$\sigma (zq)=zqJ_nq^tz^t=zJ_nz^t=\sigma(z).$$
This shows that for any skew-symmetric matrix $A\in\cn^{2n\times 2n}$ the function $\hat\phi_A :\SU{2n}\to\cn$, given by
\begin{equation*}
\hat \phi_A(z) = \trace(AzJ_nz^t),
\end{equation*}
is $\Sp n$-invariant and hence induces a map $\phi_A:\SU{2n}/\Sp n\to\cn$ on the quotient space. Note that in this case the skew-symmetry condition can be assumed without loss of generality, since $\sigma(z)$ is skew-symmetric.
\smallskip

The following useful result can be found in \cite{Gud-Sif-Sob-2}.

\begin{proposition}
{\rm \cite{Gud-Sif-Sob-2}}\label{proposition-SUSp}
For $n\ge 2$, let $a,b$ be two linearly independent elements of $\cn^{2n}$ and $A\in\cn^{2n\times 2n}$ be the skew-symmetric matrix $$A=\sum_{j,\alpha=1}^{2n} a_jb_\alpha Y_{j\alpha}.$$ 
Further let $\phi_{a,b}:\SU{2n}/\Sp{n}\to\cn$ be the complex-valued function induced by its $\Sp n$-invariant lift $\hat\phi_{a,b}:\SU{2n}\to\cn$ given by
$$\hat\phi_{a,b}(z)=\trace(A z J_n z^t).$$
Then $\phi_{a,b}$ is an eigenfunction on $\SU{2n}/\Sp{n}$ with eigenvalues
$$\lambda=-\frac{2(2n^2-n-1)}{n}\ \ \text{and}\ \ \mu=-\frac{2(n-1)}{n}.$$
\end{proposition}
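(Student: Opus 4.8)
\medskip

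The plan is to establish the eigenfunction property upstairs, on the group $\SU{2n}$, and then transfer it to the quotient. Since $\pi:\SU{2n}\to\SU{2n}/\Sp n$ is a harmonic Riemannian submersion, Corollary \ref{corollary-lift-eigen} tells us that $\phi_{a,b}$ is a $(\lambda,\mu)$-eigenfunction on $\SU{2n}/\Sp n$ if and only if its lift $\hat\phi_{a,b}$ is a $(\lambda,\mu)$-eigenfunction on $\SU{2n}$ with the same eigenvalues. I therefore work on $\SU{2n}$, using the expressions
$$\tau(\hat\phi_{a,b})=\sum_{Z\in\B_{\su{2n}}}Z^2(\hat\phi_{a,b}),\qquad \kappa(\hat\phi_{a,b},\hat\phi_{a,b})=\sum_{Z\in\B_{\su{2n}}}\bigl(Z(\hat\phi_{a,b})\bigr)^2$$
from Section \ref{section-GLC}, where the orthonormal basis splits according to $\su{2n}=\sp n\oplus\p$ as $\B_{\su{2n}}=\B_{\sp n}\cup\S$, and $\S$ is the orthonormal basis of $\p$ displayed above, consisting of the five families $Y^a_{rs},X^a_{rs},Y^b_{rs},Y^c_{rs}$ and $D^a_{rs}$.

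The decisive first step is to exploit the $\Sp n$-invariance. Because $\exp(sW)\,J_n\,\exp(sW)^t=J_n$ for every $W\in\sp n$, the lift $\hat\phi_{a,b}(z)=\trace(A\,zJ_nz^t)$ is right-invariant along $\sp n$, so $W(\hat\phi_{a,b})\equiv 0$ and hence $W^2(\hat\phi_{a,b})\equiv 0$ for all $W\in\B_{\sp n}$. Both sums therefore collapse onto $\S$. Differentiating $\hat\phi_{a,b}$ along a left-invariant field $Z$, and recording that $Z^t=-\bar Z$ for $Z\in\su{2n}$, gives
$$Z(\hat\phi_{a,b})(z)=\trace\!\bigl(Az\,P_Z\,z^t\bigr),\qquad Z^2(\hat\phi_{a,b})(z)=\trace\!\bigl(Az\,Q_Z\,z^t\bigr),$$
where $P_Z=ZJ_n+J_nZ^t$ and $Q_Z=Z^2J_n+2ZJ_nZ^t+J_n(Z^t)^2$; a short check shows that both $P_Z$ and $Q_Z$ are skew-symmetric, consistent with the assumed skew-symmetry of $A$.

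For the tension field the dependence on $A$ is linear, so
$$\tau(\hat\phi_{a,b})(z)=\trace\!\Bigl(Az\,\bigl(\textstyle\sum_{Z\in\S}Q_Z\bigr)\,z^t\Bigr)$$
and the claim reduces to the single matrix identity $\sum_{Z\in\S}Q_Z=\lambda\,J_n$. This I would verify by summing the five families of $\S$ term by term: the ``diagonal'' contribution $\sum_Z(Z^2J_n+J_n(Z^t)^2)$ produces a multiple of $J_n$ through the Casimir-type sum $\sum_Z Z^2$, while the ``cross'' contribution $2\sum_Z ZJ_nZ^t=-2\sum_Z ZJ_n\bar Z$ is again a multiple of $J_n$ by the $\Sp n$-equivariance of the sum over $\S$; the imaginary parts cancel by the symmetry of the basis, and the constant works out to $\lambda=-\tfrac{2(2n^2-n-1)}{n}$.

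The conformality operator is the main obstacle, since it is genuinely quadratic in $A$. Setting $B=z^tAz=\tfrac1{\sqrt2}\bigl(\tilde a^t\tilde b-\tilde b^t\tilde a\bigr)$ with $\tilde a=az$, $\tilde b=bz$, one has $Z(\hat\phi_{a,b})=\trace(BP_Z)$ and $\hat\phi_{a,b}=\trace(BJ_n)$, so the assertion becomes
$$\sum_{Z\in\S}\trace(BP_Z)^2=\mu\,\trace(BJ_n)^2.$$
Crucially this is not a universal matrix identity: the right-hand side is a rank-one quadratic form in $B$ whereas the left-hand side is a priori of higher rank, so equality can hold only because $B$ inherits the rank-two form coming from $a$ and $b$. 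The heart of the proof is thus to insert this explicit rank-two $B$, express each $\trace(BP_Z)$ as a bilinear expression in the components of $\tilde a,\tilde b$, square and sum across the five families, and show---using the orthonormality relations satisfied by the columns of $z\in\SU{2n}$ to absorb the cross terms---that the total telescopes into $\mu\,\trace(BJ_n)^2$ with $\mu=-\tfrac{2(n-1)}{n}$. The bookkeeping over the five families is the delicate part; an efficient way to organise it is to prove instead the polarised, bilinear identity $\kappa(\hat\phi_{a,b},\hat\phi_{c,d})=\mu\,\hat\phi_{a,b}\hat\phi_{c,d}$ for the whole eigenfamily, from which the stated eigenfunction relation follows by specialisation.
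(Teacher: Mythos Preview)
The paper does not actually prove this proposition: it is quoted verbatim from \cite{Gud-Sif-Sob-2}, and the proof that follows the proposition in the paper is for Theorem~\ref{theorem-SUSp}, not for the proposition itself. So there is no in-paper argument to compare against; your outline is precisely the standard route one would expect the cited reference to take (lift to $\SU{2n}$ via Corollary~\ref{corollary-lift-eigen}, kill the $\sp n$-summands by invariance, and compute $\tau$ and $\kappa$ over an orthonormal basis of~$\p$).

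Two remarks on your sketch. First, a technical slip: the set $\S$ displayed in Section~\ref{section-SU2n-Spn} is only a \emph{generating} set for $\p$, not an orthonormal basis. The $\binom{n}{2}$ elements $D^a_{rs}$ span an $(n-1)$-dimensional subspace and are neither linearly independent nor mutually orthogonal for $n\ge 3$, and the normalisation factors $\tfrac12$ do not give unit vectors with respect to $g(Z,W)=\Re\trace(Z\bar W^t)$. Your formulas $\tau(\hat\phi)=\sum_{Z\in\S}Z^2(\hat\phi)$ and $\kappa=\sum_{Z\in\S}(Z(\hat\phi))^2$ are therefore not literally correct; you must first pass to a genuine orthonormal basis of~$\p$ (e.g.\ replace the $D^a_{rs}$ by $n-1$ orthonormal traceless diagonal elements and adjust the scalars). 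Your invariance argument that $\sum_Z Q_Z$ is a scalar multiple of $J_n$ survives this correction, since it only uses that the sum runs over an orthonormal basis of the $\Ad(\Sp n)$-module~$\p$.

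Second, what you have written is an outline rather than a proof: the determination of the constant in $\sum_Z Q_Z=\lambda J_n$ and, more substantially, the quadratic bookkeeping for $\kappa$ exploiting the rank-two form of $A$, are asserted but not carried out. That is where all the content lies; the reference \cite{Gud-Sif-Sob-2} performs exactly these computations.
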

\smallskip

\begin{proof}(Theorem \ref{theorem-SUSp})
Let us assume that $z\in\SU{2n}$ is an element of the fibre $\hat\phi_{a,b}^{-1}(\{0\})$ i.e. $\hat\phi_{a,b}(z)=0$. The statement of Corollary \ref{corollary-phi0} guarantees the existence of such a point $z$.  Further assume that $z$ is a critical point of $\hat\phi_{a,b}:\SU{2n}\to\cn$ which means that $Y(\hat\phi_{a,b}(z))=0$ for every element in the orthogonal complement $\p$ of $\sp n$ in $\su{2n}$.
It can now be shown that for all $1\leq r<s\leq n,$ 
$$
Y_{rs}^{a}(\hat\phi_{a,b})=0, \ Y_{rs}^{b}(\hat\phi_{a,b})=0,\ X_{rs}^{a}(\hat\phi_{a,b})=0, \ Y_{rs}^{c}(\hat\phi_{a,b})=0 \ 
$$
if and only if 
\begin{eqnarray*}
0&=&(a,z_{n+s})(b,z_{r})-(a,z_{n+r})(b,z_{s})+(a,z_{s})(b,z_{n+r})-(a,z_{r})(b,z_{n+s}),\\
0&=&(a,z_{s})(b,z_{r})-(a,z_{r})(b,z_{s})-(a,z_{n+s})(b,z_{n+r})+(a,z_{n+r})(b,z_{n+s}),\\
0&=&(a,z_{n+s})(b,z_{r})+(a,z_{n+r})(b,z_{s})-(a,z_{s})(b,z_{n+r})-(a,z_{r})(b,z_{n+s}),\\
0&=&(a,z_{s})(b,z_{r})-(a,z_{r})(b,z_{s})+(a,z_{n+s})(b,z_{n+r})-(a,z_{n+r})(b,z_{n+s}).
\end{eqnarray*}
From the first and the third equations we immediately see that for all $1\leq r<s\leq n,$ we have
\begin{eqnarray}
(a,z_r)(b,z_{n+s})&=&(a,z_{n+s})(b,z_{r}),\label{suspeq1}\\
(a,z_{s})(b,z_{n+r})&=&(a,z_{n+r})(b,z_{s}).\label{suspeq2}
\end{eqnarray}  
Similarly, the second and the fourth equations tell us that for all $1\leq r<s\leq n$,
\begin{eqnarray}
(a,z_r)(b,z_{s})&=&(a,z_{s})(b,z_{r}),\label{suspeq3}\\
(a,z_{n+r})(b,z_{n+s})&=&(a,z_{n+s})(b,z_{n+r}).\label{suspeq4}
\end{eqnarray}

Since the columns of $\Bar{z}$ form a basis of $\cn^{2n}$, there exists an index $1\leq t\leq 2n$ such that $$\langle a,\Bar{z}_t\rangle =(a,z_t)\neq0.$$ Without loss of generality, we can assume that $1\leq t\leq n.$ Equations (\ref{suspeq1}) and \ref{suspeq3} now show that for all $1\leq s\leq n, \ s\neq t,$ it holds that $$(b,z_{n+s})=\frac{(a,z_{n+s})}{(a,z_t)}\cdot(b,z_t) \ \ \text{and} \ \ (b,z_{s})=\frac{(a,z_s)}{(a,z_t)}\cdot(b,z_t).$$
Now note that
$\phi_{a,b}(z)=0$ if and only if $$0=(a,z_{n+1})(b,z_{1})+\dots  +(a,z_{2n})(b,z_{n})-(a,z_{1})(b,z_{n+1})-\dots-(a,z_{n})(b,z_{2n}).$$
Entering our latest findings into this equation, we note that
\begin{eqnarray*}
    0&=&(a,z_{n+t})(b,z_t)+\sum_{s\neq t}(a,z_{n+s})\cdot\frac{(a,z_s)}{(a,z_t)}\cdot(b,z_t)\\
    &&-(a,z_t)(b,z_{n+t})-\sum_{s\neq t}(a,z_s)\cdot\frac{(a,z_{n+s})}{(a,z_t)}\cdot(b,z_t)\\
    &=&(a,z_{n+t})(b,z_{t})-(a,z_t)(b,z_{n+t}).
\end{eqnarray*}
Thus, we also have that $$(b,z_{n+t})=\frac{(a,z_{n+t})}{(a,z_t)}\cdot (b,z_{t}).$$
To summarise, we have now established that for every $1\leq s\leq 2n,$ $$\langle b,\Bar{z}_s\rangle =(b,z_{s})=\frac{(a,z_{s})}{(a,z_t)}\cdot(b,z_t)=\frac{(b,z_t)}{(a,z_t)}\cdot\langle a, \Bar{z}_s\rangle.$$ 
Thus, $$b=\sum_{s=1}^{2n}\langle b,\Bar{z}_s\rangle \Bar{z}_s=\frac{(b,z_t)}{(a,z_t)}\cdot\sum_{s=1}^{2n}\langle a,\Bar{z}_s\rangle \Bar{z}_s=\frac{(b,z_t)}{(a,z_t)}\cdot a.$$ 
This is a contradiction, since we have assumed that the elements $a$ and $b$ are linearly independent. We conclude that $\phi_{a,b}$ is regular over $0.$ 
By Theorem \ref{theorem-Gud-Mun-1}, $\phi_{a,b}^{-1}(\{0\})$ is a minimal submanifold of $\SU{2n}/\Sp{n}.$
\end{proof}

\section{Acknowledgements}

The authors are grateful to Thomas Jack Munn and Oskar Riedler for useful discussions on this work. 
They would also like to thank the anonymous referee for important suggestions on the presentation.

\appendix

\section{The linear system for $\SO{2n}/\U n$}
\label{appendix-A}

\medskip

\centerline{\bf The Case $n=2$.}
\smallskip

Here we assume that $(u,x_1)\neq 0$ i.e. $r=1$ and  $s=2$. Then obtain the following system of $4=2\,(2-1)+2$ equations
$$
\begin{bmatrix}
-(u,x_4) & (u,x_3) & -(u,x_2) &  (u,x_1) \\
-(u,x_2) & (u,x_1) &  (u,x_4) & -(u,x_3) \\
 (u,x_1) & (u,x_2) &  (u,x_3) &  (u,x_4) \\
 (u,x_3) & (u,x_4) & -(u,x_1) & -(u,x_2) \\
\end{bmatrix}
\cdot
\begin{bmatrix}
(b,x_1)\\(b,x_2)\\(b,x_3)\\(b,x_4)
\end{bmatrix}
=0.
$$
The matrix $M_2$ can then be transformed into $\tilde M_2=(u,x_1)\cdot I_4+S_2$.
$$
\tilde M_2=
\begin{bmatrix}
 (u,x_1) & (u,x_3) & -(u,x_4) &  (u,x_2) \\
-(u,x_3) & (u,x_1) & -(u,x_2) & -(u,x_4) \\
 (u,x_4) & (u,x_2) &  (u,x_1) & -(u,x_3) \\
-(u,x_2) & (u,x_4) &  (u,x_3) &  (u,x_1) \\
\end{bmatrix}
$$
It can be shown that  $|\det M_2|=|\det \tilde M_2|=|u|^4\neq 0$.

\medskip\medskip
\centerline{\bf The Case $n=3$.}
\smallskip

Here we assume that $(u,x_1)\neq 0$ i.e. $r=1$ and  $s=2,3$ and yield the following matrix $M_3\in\rn^{6\times 6}$
$$M_3=
\begin{bmatrix}
-(u,x_5) & (u,x_4) &       0 & -(u,x_2) &  (u,x_1) & 0 \\
-(u,x_2) & (u,x_1) &       0 &  (u,x_5) & -(u,x_4) & 0 \\
-(u,x_6) &       0 & (u,x_4) & -(u,x_3) &        0 &  (u,x_1) \\
-(u,x_3) &       0 & (u,x_1) &  (u,x_6) &        0 & -(u,x_4) \\
 (u,x_1) & (u,x_2) & (u,x_3) &  (u,x_4) &  (u,x_5) &  (u,x_6) \\
 (u,x_4) & (u,x_5) & (u,x_6) & -(u,x_1) & -(u,x_2) & -(u,x_3) \\
\end{bmatrix}
.
$$
The matrix $M_3$ can then be transformed into $\tilde M_3=(u,x_1)\cdot I_6+S_3$.
$$\tilde M_3=
\begin{bmatrix}
 (u,x_1)& (u,x_4)&        0&       0 & -(u,x_5) &  (u,x_2)\\
-(u,x_4)& (u,x_1)&        0&       0 & -(u,x_2) & -(u,x_5)\\
       0&       0&  (u,x_1)& (u,x_4) & -(u,x_6) &  (u,x_3)\\
       0&       0& -(u,x_4)& (u,x_1) & -(u,x_3) & -(u,x_6)\\
 (u,x_5)& (u,x_2)&  (u,x_6)& (u,x_3) &  (u,x_1) & -(u,x_4)\\
-(u,x_2)& (u,x_5)& -(u,x_3)& (u,x_6) &  (u,x_4) &  (u,x_1)\\
\end{bmatrix}
.
$$
$$|\det M_3|=|\det \tilde M_3|=(\,(u,x_1)^2+(u,x_4)^2\,)\cdot |u|^4\neq 0.$$

\medskip\medskip
\centerline{\bf The General Case $n\ge 2$.}
\smallskip

Here we assume that $(u,x_1)\neq 0$ i.e. $r=1$ and  $s=2,3,\dots n$ and then obtain the following system of $2n=2\,(n-1)+2$ equations
\begin{eqnarray*}
0&=&(u,x_r)(b,x_{n+s})+(u,x_{n+r})(b,x_s)-(u,x_s)(b,x_{n+r})-(u,x_{n+s})(b,x_r),\\
0&=&(u,x_r)(b,x_{s})+(u,x_{n+s})(b,x_{n+r})-(u,x_s)(b,x_{r})-(u,x_{n+r})(b,x_{n+s}),\\
0&=&\Re\, (a,b)=\sum_{k=1}^{2n}(u,x_k)\,(b,x_k),\\
0&=&\Re\, \hat\phi_{a,b}(x)=-\sum_{k=1}^{n}
\big( (u,x_{n+k})\,(b,x_k)-(u,x_{k})\,(b,x_{n+k})\big).
\end{eqnarray*}

The corresponding matrix $M_{n}$ can then be transformed into $$\tilde M_{n}=(u,x_1)\cdot I_{2n}+S_{n}.$$


\end{document}